\newcommand{\Hom}{\operatorname{Hom}\nolimits}
\newcommand{\Ext}{\operatorname{Ext}\nolimits}
\renewcommand{\c}[1]{\mathcal #1}
\newtheorem{questions}[theorem]{Questions}
\newcommand{\gldim}{\operatorname{gldim}}
\renewcommand{\sup}{\operatorname{sup}}
\renewcommand{\Ext}{\operatorname{Ext}}
\newcommand{\dm}{\operatorname{dim_k}\nolimits}
\def\m{\text{mod}\,\Lambda}
\begin{document}

\title*{Algebras of finite global dimension}
\titlerunning{Algebras of finite global dimension}
\author{Dieter Happel and Dan Zacharia}
\authorrunning{Happel and Zacharia}
\institute{Dieter Happel \at Fakult\"at f\"ur Mathematik, Technische Universit\"at 
Chemnitz, 09107 Chemnitz, Germany., \email{happel@mathematik.tu-chemnitz.de}
\and Dan Zacharia \at Department of Mathematics, Syracuse University,
Syracuse, NY 13244-0001, USA. \email{ zacharia@syr.edu}}


\thanks{The second author is supported by NSA grant H98230-11-1-0152. Most of the 
material presented here results  from numerous discussions over the years 
during mutual visits. It contains 
some of the material 
presented by the first author during a talk at the 2011 Abel Symposium in 
Balestrand, 
Norway. Both authors are  thankful for the invitation to participate at this 
symposium.}


\maketitle

\abstract{Let $\Lambda$ be a finite dimensional algebra over an algebraically closed 
field $k$. We survey some results on algebras of finite global dimension
and address some open problems.}

\bigskip
\bigskip

Let $\Lambda$ be a finite dimensional algebra over an algebraically closed
field $k$. We denote
by $\m$ the category of finitely generated left $\Lambda$-modules. 
In this article we are mainly interested in algebras of finite global
dimension, so each $X\in\m$ admits a finite projective resolution, or
equivalently each simple $\Lambda$-module admits a finite projective 
resolution. It is well-known that the global dimension, $\gldim\Lambda$, of $\Lambda$ is 
the maximum 
of the lengths of these 
finitely many minimal projective resolutions of the simple $\Lambda$-modules.
These notions go back to the pioneering book by Cartan and Eilenberg \cite{CE}.
They were intensively studied in a famous series of ten papers in the 
Nagoya Journal written by various authors and published over the years 1955 
to 1958, see \cite{A1}, \cite{A2}, \cite{B}, \cite{EN1}, \cite{EN2}, \cite{EIN},  \cite{ENN},
 \cite{ERZ}, \cite{JN} and \cite{Ka}
for this series of articles.

\medskip

The global dimension being preserved under Morita equivalence, implies that we may assume without loss of generality that 
$\Lambda$ is basic.  As the field $k$ is algebraically closed, $\Lambda$ 
is given by a quiver with relations. We briefly recall the construction. 
We start by recalling the definition of an admissible ideal. Let $Q$ be
a finite quiver and let $kQ$ be the path algebra over $k$. Recall that the set  $\mathcal W=\{\text{paths in}\,\,Q\}$ forms a $k-$basis for 
$kQ$. Denote by
$Q_{\ge t}$ the two sided ideal of $kQ$ generated by all the paths in $Q$ of length
$t$. A two sided ideal $I$ in $kQ$ is called {\it admissible} if there exists a natural number $t\ge 2$ such that 
$$Q_{\ge t}\subseteq
I\subseteq Q_{\ge 2}.$$ 

\noindent
Then it is well-known (see \cite{G}) that every basic finite dimensional $k$-algebra $\Lambda$ 
satisfies $\Lambda\simeq kQ/I$ for some finite quiver $Q$ and admissible ideal $I$ in $kQ$. 
Note that the quiver $Q$ is uniquely determined
by $\Lambda$. By abuse of language, a minimal generating set of the ideal $I$ is called
a set of relations for $\Lambda$. This set is always finite, but
we may have different choices for the relations for $\Lambda$. In 
particular there is usually no canonical choice for the relations.
Note also that due to our assumptions on the admissible ideal $I$, the quiver $Q$ can be 
recovered from $\Lambda=kQ/I$. To be more precise, the vertices of $Q$ correspond to 
the (isomorphism classes of) simple
$\Lambda$-modules and the number of arrows from a simple 
$\Lambda$-module $S$
 to a
simple $\Lambda$-module $S'$
coincides with $\dm\Ext^1_\Lambda(S,S')$. We remind the reader 
that there is also a more ring theoretic version to find the quiver of 
$\Lambda$
(see for example \cite{ARS}).

\medskip

We will address two basic questions for algebras of finite global dimension.
First we deal with the question on possible obstructions for the quiver
of an algebra of finite global dimension. It has been known for a long time
\cite{L}, \cite{I} that the quiver of an algebra $\Lambda$ of finite global 
dimension does not contain any loops, or equivalently  
$\Ext^1_\Lambda(S,S)=0$ for
all simple $\Lambda-$modules $S$.
This result has been recently strengthened by \cite{ILP}. For details see Section 
\ref{obstr}. 

\medskip

There is a completely different behavior for algebras $\Lambda$ of finite
global dimension satisfying
$\gldim\Lambda\le 1$ or $\gldim\Lambda\ge 2$.

\medskip

In the case when $\gldim\Lambda\le 1$, the algebra $\Lambda$ 
is hereditary. Thus $\Lambda=kQ$ for some finite quiver $Q$. Since we assume that $\Lambda$ is 
finite dimensional,
this implies that the quiver $Q$ has no oriented cycles. Conversely, if 
$Q$
does not contain any oriented cycle, then the algebra $kQ$ satisfies
$\gldim\,kQ\le 1$. Of course $\gldim\,kQ = 0$ if 
and only if $Q$ contains no arrows. So this case is well-understood. 

\medskip

In contrast to the first case the situation becomes more complicated and 
interesting for higher values of $\gldim\,\Lambda$.
Moreover there are still a lot of open problems to deal with in this case. 
We will show in Section \ref{constr} that for an arbitrary quiver 
there will
exist a two sided ideal $I$ in the path algebra $kQ$ such that for 
$\Lambda=kQ/I$ we have that
$\gldim\,\Lambda\le 2$, as long as the quiver does not contain a 
loop. This had been known for a long time by Dlab and Ringel \cite{DR1} and 
\cite{DR2}. It was recently rediscovered independently by N. Poettering 
\cite{P} and the first author. We are grateful to C. M. Ringel for pointing 
out the two references mentioned above. We will also prove another related 
result in this 
section and pose a few open problems. For example, obtaining necessary and sufficient conditions on a given quiver $Q$ such that there exists
a two sided ideal $I$ with $\gldim\,kQ/I=d$ for a prescribed 
natural number $d\ge 3$ is still open.

\medskip

In Section \ref{bounds} we address the following two related
problems. Fix a quiver $Q$ without loops, and consider the 
following set of algebras
$$\mathcal A(Q)=\{kQ/I\,|\,\text{dim}_kkQ/I<\infty\,\,\text{and}
\,\,\gldim\,kQ/I<\infty\}.$$

\noindent
We then define the following 
$$d(Q)=\text{sup}\,\{\dm kQ/I\,|\,kQ/I\in
\mathcal A(Q)\}$$

\noindent and 
$$g(Q)=\text{sup}\,\{\gldim kQ/I\,|\,kQ/I\in
\mathcal A(Q)\}.$$

\noindent
The basic problem is whether or not these are finite. In general 
this seems to be an open problem. Note that the set $\mathcal A(Q)$
can be infinite. A concrete example is included in Section \ref{bounds}.

\medskip

We will look at the relationship between $d(Q)$ and $g(Q)$. 
For example it follows from 
a result in \cite{S} that $g(Q)<\infty$ if $d(Q)<\infty$.
We will also discuss what is known about  $d(Q)$ and $g(Q)$ when one restricts the 
algebras in $\mathcal A(Q)$ to some special subclasses of algebras. These subclasses include the
serial, monomial and quasi-hereditary algebras. 
We remind the reader that except for a few instances,
the global dimension of an algebra $\Lambda$ 
does not depend solely on the number of simple $\Lambda$-modules, nor 
on the Loewy length of the algebra $\Lambda$. We refer to Section \ref{bounds} for details
and some examples.

\medskip

We are aware of quite a number of  other problems and results
for algebras of finite global dimension.
For example we could mention the Cartan determinant problem (see for example
\cite{Z1}). Also we will not deal with homologically finite subcategories
in module categories for algebras of finite global dimension (see for example
\cite{AR1}, \cite{AR2} or \cite{HU}). Moreover, in order to keep the level of 
exposition as simple as possible, we will not attempt to formulate the results 
in the most general form available and will not use the language of derived 
categories (see for example \cite{H1}).
Lastly, we will not attempt to be complete. We rather concentrate on the 
things which were mentioned before.

\medskip

We denote the composition of morphisms $f\colon X\to Y$ and $g\colon Y\to Z$ in a 
given category ${\c K}$ by $fg$. The notation and terminology introduced here will be fixed throughout
this article. For unexplained representation-theoretic terminology, 
we refer to \cite{ARS}, and \cite{R1}.  

\section{Preliminaries}
\label{prelim}

In this section we recall some further notation and definitions 
that will be used in the paper. Keeping the notation from the introduction, we assume that the finite dimensional algebra $\Lambda$ is isomorphic to $kQ/I$ for a finite quiver $Q$ and an admissible two sided
ideal $I$ in the path algebra $kQ$. The set of vertices $Q_0$ of $Q$
will be identified with the set $\{1,2,\dots,n\}$. If $\alpha$ is an arrow
of $Q$, we denote by $s(\alpha)\in Q_0$ and by $e(\alpha)\in Q_0$ 
the starting point of $\alpha$ (end point respectively). A path 
$w=(i|\alpha_1,\dots,\alpha_r|j)$ from $i$ to $j$ in $Q$ is by definition   
a sequence of consecutive arrows $\alpha_1,\dots,\alpha_r$ such that $s(\alpha_1)=i$,
$e(\alpha_t)=s(\alpha_{t+1})$, for $1\le t<r$ and $e(\alpha_r)=j$.
The corresponding element $w$ in $kQ$ is denoted by $\alpha_1\dots\alpha_r$ and
we say that this path has length $r$.
For a vertex $i$ of $Q$ we denote by $S(i)$ the corresponding
simple $\Lambda-$module and by $P(i)$ its projective cover. If $X\in\m$
we denote by $\text{proj.dim}_\Lambda X$ the projective dimension of $X$, that is, 
the length $d$ of a minimal projective resolution of $X$:
$$0\to P^d\to P^{d-1}\to\cdots\to P^1\to P^0\to X\to 0$$
\noindent
Let us pause for a moment. Since in this article we are mainly interested in 
algebras of finite global dimension, we want to point out here a few additional
facts. 

First, assume that $\Lambda$ is
given as $kQ/I$  and that its quiver  $Q$ contains no oriented cycle. In this case 
the global dimension of $\Lambda$ is always finite and is bounded by the length of a maximal path in $Q$. In particular, $\gldim\Lambda\le 
|Q_0|-1$.
So the more interesting algebras for us in this context are 
those whose
quivers contain oriented cycles. By glueing techniques one could even assume
that each vertex of $Q$ lies on an oriented cycle. But we refrain from
going into this here.

\medskip

We should also make another remark which is useful
to keep in mind, but will not be further investigated in this paper. Suppose that we 
have given a 
quiver $Q$
and an ideal $I$ with generating set $r_1,\dots, r_t$ having the property 
that the 
coefficients
occurring in $r_j$ only belong to $\{1,-1\}$ for all $1\le j\le t.$ Then we 
may consider
$\Lambda_k=kQ/I$ for different fields $k.$ It is well-known that 
$\gldim\Lambda_k$
may depend on the characteristic of the field $k.$ Examples for this 
phenomena can be found for instance
in \cite{C} and \cite{IZ2}.
They were obtained in the following way. Start with a simplicial complex 
and consider the 
induced partial
order. Then consider the corresponding incidence algebra. Then simplicial 
homology is 
related to the 
Hochschild cohomology of the incidence algebra \cite{GS}. There are well 
known examples 
that simplicial homology depends
on the characteristic of the field of coefficients. This is used in 
\cite{IZ2} to construct 
examples where
the global dimension depends on the characteristic of the field.  For 
related investigations 
we also refer to \cite{GSZ}.

\medskip
For later purposes
we will need the construction of the {\it standard modules} $\Delta(i)$ for
$i\in Q_0.$ For this let $\le$ be a fixed partial order on the set 
of vertices $Q_0=\{1,2,\dots,n\}$.
For a vertex $i\in Q_0$ the module $\Delta(i)$ is the largest quotient of
$P(i)$ with composition factors $S(j)$ for $j\ge i$. We say (see \cite{R2}) that $\Lambda$ is
{\it strongly quasi-hereditary} (with respect to the partial order $\le$) 
 if for every $i\in Q_0$ there is an 
exact sequence 
$$0\to \Omega(\Delta(i))\to P(i)\to\Delta(i)\to 0$$
\noindent such that $\Omega(\Delta(i))$ is a direct sum of projective modules $P(j)$
with $j<i$ and $ \text{End}_\Lambda \Delta(i)\simeq k$ for all $1\le i\le n$.

\medskip
It was shown in \cite{R2} that a strongly quasi-hereditary algebra is
quasi-hereditary in the usual sense (compare for example \cite{CPS} or
\cite{DR3}) and that
$\gldim\Lambda\le n$, where $n$ is the number of simple pairwise 
different
$\Lambda$-modules.  We refer to \ref{DR} for a bound on the global
dimension for an arbitrary quasi-hereditary algebra $\Lambda$.

\medskip

We will also need the concept of a monomial algebra. They are a good source of
examples, since most homological problems are easy to decide.
At the same time we would like to warn the reader that the monomial algebras are far from being typical amongst
finite dimensional algebras. Recall that a basic
finite dimensional $k$-algebra $\Lambda$ is called a {\it monomial algebra} if
$\Lambda\simeq kQ/I$ for a quiver $Q$ and an admissible ideal $I$
which can be generated by paths in $Q$. We point out that there is still
no ring theoretic characterization for monomial algebras.
 The easiest examples of monomial algebras are the
finite dimensional algebras 
whose quiver is a tree.  For special homological properties of 
monomial algebras like the computations of minimal projective
resolutions we refer to \cite{GHZ} and \cite{HZ}. Note also that for monomial algebras, the global dimension does not depend on the ground field. For a path $w\in Q$ we 
denote by $\bar w$ the residue class of $w$ in $kQ/I$. We have the following straightforward result whose proof is left to the reader. 

\begin{proposition}\label{mono1} Let $\Lambda =kQ/{\langle w_1,\dots,w_r\rangle}$ be a 
finite dimensional monomial algebra.
Then the set $\mathcal J=\{\bar v\in\Lambda\,|\, v\,\,\text{a path 
in}\,\,Q,\,v\notin I \}$ is finite and forms a $k$-basis
of $\Lambda$.\qed
\end{proposition}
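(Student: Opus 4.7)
The plan is to identify $\mathcal{J}$ with the image in $\Lambda$ of the set $G$ of paths in $Q$ that contain none of the generators $w_1,\dots,w_r$ as a consecutive subpath, and to verify that this image forms a $k$-basis. I would start by partitioning the path basis as $\mathcal{W}=G\sqcup B$, where $B$ consists of those paths admitting a factorization $v=aw_jb$ as a concatenation of paths for some $j$ and some (possibly trivial) paths $a,b$.

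The main step is to prove that $I=\mathrm{span}_k(B)$. The inclusion $\mathrm{span}_k(B)\subseteq I$ is immediate since each $aw_jb$ lies in $I$. For the reverse inclusion, every element of $I$ is a $k$-linear combination of products $pw_jq$ with $p,q\in kQ$; expanding $p$ and $q$ in the path basis $\mathcal{W}$, each resulting term $p'w_jq'$ with $p',q'$ paths is either zero (when the endpoints do not match, so the concatenation is not defined) or a path in $B$. This is the only substantive step, and it depends essentially on the monomial hypothesis: since each $w_j$ is a single path, the product of two paths in $kQ$ is again either a single path or zero, so no cancellations can occur and no combinations of paths outside $B$ are produced.

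Once $I=\mathrm{span}_k(B)$ is in hand, the direct sum decomposition $kQ=\mathrm{span}_k(G)\oplus\mathrm{span}_k(B)$ of $k$-vector spaces forces the canonical projection $kQ\to\Lambda$ to restrict to a $k$-linear isomorphism on $\mathrm{span}_k(G)$. Consequently, the classes $\bar v$ for $v\in G$ are pairwise distinct and nonzero, and they form a $k$-basis of $\Lambda$; this basis is exactly the set $\mathcal{J}$. Finiteness of $\mathcal{J}$ then follows at once from $\dim_k\Lambda<\infty$.

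The main obstacle is the equality $I=\mathrm{span}_k(B)$, which is the real content of the statement; everything else is routine linear algebra. The step itself is elementary but is precisely where the monomial assumption pays off: for a general admissible ideal one cannot expect the non-ideal paths to be linearly independent modulo $I$, so the partition $\mathcal{W}=G\sqcup B$ would fail to give a complementary subspace to $I$.
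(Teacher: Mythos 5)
Your argument is correct: since each generator $w_j$ is a single path and the product of paths in $kQ$ is again a path or zero, the ideal $I$ is exactly the span of the paths containing some $w_j$ as a subpath, so the remaining paths map to a $k$-basis of $\Lambda$, which is finite because $\Lambda$ is finite dimensional. The paper explicitly leaves this proof to the reader, and your proof is precisely the straightforward argument intended.
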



\medskip
We recall the concept of a Nakayama algebra or generalized uniserial 
algebra. There are various possible 
definitions. We will use a quite concrete one. For different but equivalent
formulations we refer for example to \cite{ARS} or \cite{Ku}, \cite{N}. 
Let $Q$ be either a linearly oriented
quiver with underlying graph $\mathbb A_n$ or a cycle $\tilde{\mathbb A}_n$
with cyclic orientation. So $Q$ is one of the following

\begin{center}

\includegraphics[scale=0.8]{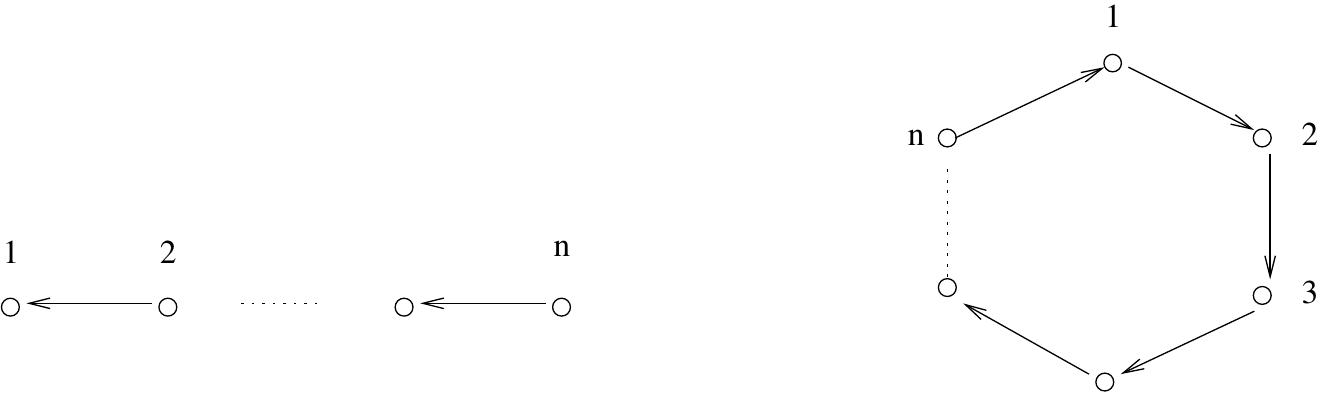}

\end{center}

\noindent
A quotient $\Lambda$ of $kQ$ by an 
admissible ideal is called a Nakayama algebra. Note that a Nakayama algebra 
is a special case of a monomial
algebra. 

\medskip

In the first case of a linearly oriented $\mathbb A_n$ the 
corresponding
Nakayama
algebra $\Lambda$ satisfies $\gldim\Lambda\le n-1,$ so is always 
of finite global 
dimension, while in the second case of
a cycle the corresponding Nakayama algebra may or may not be of finite 
global dimension, 
as very easy examples show. For example, let $Q$ is a cyclic quiver with
two vertices and arrows $\alpha,\beta$. If we choose $I=\langle\alpha\beta\rangle$,
then  $\gldim\,kQ/I =2$. But if we choose 
$I'=\langle\alpha\beta,\beta\alpha\rangle$, then  $\gldim\,kQ/I' =\infty.$
And of course both algebras are Nakayama algebras. 

\medskip

Given a Nakayama algebra $\Lambda$, one may associate to it its 
{\it Kupisch series}. From this series we can decide in a purely 
combinatorial way whether or not $\gldim\Lambda<\infty.$ For
details we refer to \cite{F}.

\section{Obstructions}\label{obstr}

First we will recall the definition of the Hochschild homology and cohomology 
of a finite
dimensional algebra $\Lambda$. We will not use the original definitions due 
to Hochschild \cite{Ho}, but use instead the characterization given in 
\cite{CE}. For this
let $\Lambda^e=\Lambda\otimes_k\Lambda^{op}$ be the enveloping algebra, 
where we
have denoted by $\Lambda^{op}$ the opposite algebra. Then the
$\Lambda-\Lambda$-bimodule ${}_\Lambda\Lambda_\Lambda$ is considered as 
a $\Lambda^e$-module.
For $i\ge 0$ the $i$-th Hochschild homology space is denoted by 
$H_i(\Lambda)$ and is defined as $\text{Tor}_i^{\Lambda^e}(\Lambda,\Lambda)$.
We denote by $[\Lambda,\Lambda]$ the $k$-subspace of $\Lambda$ generated by the
commutators, thus elements of the form $\lambda\mu - \mu\lambda$ for 
$\lambda,\mu\in\Lambda.$ Then it is easy to see that $H_0(\Lambda)$ coincides
with the factor space $\Lambda/[\Lambda,\Lambda]$. 

\medskip

Also for $i\ge 0$ we denote by $H^i(\Lambda)$ the $i$-th Hochschild
cohomology space and this is defined as 
$\Ext^i_{\Lambda^e}(\Lambda,\Lambda)$. So $H^0(\Lambda)$ coincides
with the center of $\Lambda$ and $H^1(\Lambda)$ is the factor space of
all derivations on $\Lambda$ by the subspace of inner derivations.

\medskip

The following theorem is due to Keller \cite{K}.
\begin{theorem}\label{Hochschild} Let $\Lambda$ be a finite dimensional 
algebra of finite 
global dimension. Then $H_i(\Lambda)=0$ for $i>0$ and $H_0(\Lambda)=
k^{|Q_0|}$. In particular,  
$\Lambda/[\Lambda,\Lambda] = k^{|Q_0|}.$
\end{theorem}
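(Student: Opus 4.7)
The plan is to split the statement into the two assertions $H_0(\Lambda)=k^{|Q_0|}$ and $H_i(\Lambda)=0$ for $i>0$, attacking the first with largely elementary means and invoking Keller's derived-categorical machinery for the second.

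For $H_0(\Lambda)=\Lambda/[\Lambda,\Lambda]$ I would first establish the lower bound $\dim_k H_0(\Lambda)\ge |Q_0|$. Since $\Lambda$ is basic over an algebraically closed $k$, the semisimple quotient $\Lambda/\operatorname{rad}\Lambda\cong k^{|Q_0|}$ is commutative, so the projection $\Lambda\to \Lambda/\operatorname{rad}\Lambda$ kills commutators and factors through a surjection $\Lambda/[\Lambda,\Lambda]\twoheadrightarrow k^{|Q_0|}$. The matching upper bound amounts to showing $\operatorname{rad}\Lambda\subseteq[\Lambda,\Lambda]$. Writing $\Lambda=kQ/I$, the radical is spanned by residues of paths of positive length; for a path $w$ from $i$ to $j$ with $i\ne j$ the identity $[w,e_i]=we_i-e_iw=w$ immediately gives $w\in[\Lambda,\Lambda]$. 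The residual case is a cyclic path $w\in e_i\Lambda e_i$, which \emph{a priori} is only equivalent modulo $[\Lambda,\Lambda]$ to its cyclic rotations; to conclude $w\in[\Lambda,\Lambda]$ one must exploit finite global dimension, for instance by producing a rotation of $w$ that vanishes in $\Lambda$ using the relations in $I$, or via the Dennis trace $K_0(\Lambda)\otimes_{\mathbb Z}k\to H_0(\Lambda)$ together with $K_0(\operatorname{mod}\Lambda)\cong \mathbb Z^{|Q_0|}$, the latter isomorphism resting on the existence of finite projective resolutions for the simples.

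For the vanishing $H_i(\Lambda)=0$ in positive degrees I would appeal to Keller's invariance theorem for the Hochschild homology of dg-enhancements of derived categories. When $\gldim\Lambda<\infty$ one has $D^b(\operatorname{mod}\Lambda)=K^b(\operatorname{proj}\Lambda)$, so $\Lambda$ is smooth and proper as a $\Lambda^e$-module and there is a finite projective bimodule resolution $P_\bullet\to\Lambda$, which already forces $H_i(\Lambda)=0$ for $i$ sufficiently large. The stronger vanishing for every $i>0$ then follows from Keller's machinery, which identifies $H_*(\Lambda)$ with the Hochschild homology of the dg-category of perfect complexes and exploits its invariance under derived (dg-)Morita equivalence.

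The main obstacle is, in my view, not the lower bound on $H_0$ but the combination of the matching upper bound and the higher vanishing: neither admits a short module-theoretic proof, and both rely on genuinely derived-categorical input. This is precisely why the result is attributed to Keller and cited from~\cite{K} rather than proved ab initio.
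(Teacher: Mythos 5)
The first thing to say is that the paper does not prove Theorem \ref{Hochschild} at all: it is stated as a quotation from Keller \cite{K}, the only related material in the text being the subsequent remark that a path between two distinct vertices is a commutator, so that the theorem forces every proper cycle into $[\Lambda,\Lambda]$. So your proposal cannot be matched against an internal proof; what can be assessed is the elementary half you undertake yourself, and there the decisive step is missing. The surjection $\Lambda/[\Lambda,\Lambda]\twoheadrightarrow\Lambda/\operatorname{rad}\Lambda\cong k^{|Q_0|}$ and the commutator identity for paths with distinct endpoints hold for \emph{every} finite dimensional basic algebra; the whole content of $H_0(\Lambda)=k^{|Q_0|}$ is that the class of every oriented cycle lies in $[\Lambda,\Lambda]$, and neither of your suggestions establishes this. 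The ``vanishing rotation'' idea only shows that $\bar w$ is congruent to its rotations modulo $[\Lambda,\Lambda]$; that some rotation actually lies in $I$ is precisely Proposition \ref{prop1}, which concerns monomial algebras and is deduced in the paper \emph{from} Theorem \ref{Hochschild}, so using it here is circular, and for non-monomial algebras rotations alone do not suffice --- one must also feed in the non-monomial relations, as the example with $I=\langle\delta\alpha-\beta\gamma,\gamma\beta\rangle$ in Section \ref{bounds} shows. The trace route points in the right classical direction: the needed statement is in essence Lenzing's theorem \cite{L}, reproved by Igusa \cite{I}, that nilpotent elements of a ring of finite global dimension are sums of commutators, which applied to $\operatorname{rad}\Lambda$ gives the upper bound. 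But as written it is a slogan: $K_0(\mod\Lambda)\cong\mathbb Z^{|Q_0|}$ holds for every finite dimensional algebra by Jordan--H\"older (finite global dimension rather gives invertibility of the Cartan map), and the existence of a trace on $K_0$ does not by itself yield $\operatorname{rad}\Lambda\subseteq[\Lambda,\Lambda]$; one must define a Hattori--Stallings-type trace for endomorphisms of modules of finite projective dimension via alternating sums over lifted resolutions, check independence of choices, and show nilpotent elements have trace zero. None of this is sketched, so the $H_0$ part has a genuine gap --- and, contrary to your closing remark, this part \emph{does} admit a module-theoretic proof predating any derived-categorical input.

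For the vanishing of $H_i(\Lambda)$ in positive degrees you defer to \cite{K} exactly as the paper does, which is legitimate, but the mechanism you describe is not the one that does the work. Invariance under derived (dg) Morita equivalence alone gives no vanishing, since $\Lambda$ is not derived equivalent to $\Lambda/\operatorname{rad}\Lambda$; what finite global dimension makes available, beyond $\operatorname{per}\Lambda=D^b(\mod\Lambda)$, is Keller's localization and d\'evissage results, which give $H_*(\Lambda)\cong H_*(\Lambda/\operatorname{rad}\Lambda)=k^{|Q_0|}$ concentrated in degree zero; note this covers the $H_0$ statement as well, so quoting \cite{K} at full strength makes your elementary half redundant rather than complementary. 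Two smaller inaccuracies: the implication from $\gldim\Lambda<\infty$ to $\Lambda$ being perfect over $\Lambda^e$ uses separability of $\Lambda/\operatorname{rad}\Lambda$, i.e.\ that $k$ is perfect (harmless here since $k$ is algebraically closed, but it should be said), and that perfection only bounds the degrees in which $H_i$ can be nonzero --- it does not by itself give vanishing in all positive degrees, which is the actual assertion of the theorem.
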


We remark that in \cite{Ha} it is conjectured that the converse of 
\ref{Hochschild}
holds. There this converse is proved in special cases such as monomial 
algebras. 

\medskip

We point out that there is no characterization of algebras of finite
global dimension through  Hochschild cohomology. It is easy to see 
that for an algebra
$\Lambda$ of finite global dimension $d$ the Hochschild cohomology spaces
$H^j(\Lambda)=0$ for $j>d$ (see for example \cite{H3}), in particular 
the cohomology algebra $H^*(\Lambda)=\bigoplus_{j=0}^\infty H^i(\Lambda)$ is finite dimensional 
over $k$.
But this does not
yield a characterization, since the converse does not hold as shown in 
\cite{BGMS}.
For the convenience of the reader we will include their example. 
For $q\in k$ consider the finite dimensional algebra
$\Lambda_q=k\langle x,y\rangle/{\langle x^2,xy+qyx,y^2\rangle}$, where $k\langle x,y\rangle$
is the free algebra in two generators. It is easy to see that $\Lambda_q$ is a selfinjective algebra so 
$\gldim\Lambda_q=\infty$. If $q$ is not a root of unity, then it
is shown in \cite{BGMS} that $\text{dim}_k H^*(\Lambda)=5$.

\medskip

We will need the following observation:
Let $\Lambda=kQ/I$. We denote the primitive orthogonal idempotents of $\Lambda$ corresponding
to the vertices $\{1,\dots,n\}$ of $Q$ by $e_1,\dots,e_n$. Let $w=(i|\alpha_1,\dots,\alpha_r|j)$ be a path 
in $Q$, and assume that  $i\neq j$. Then $\bar w=e_i\bar w=e_i\bar w-\bar we_i$, since $\bar we_i
=0$ if $i\neq j$. Therefore $\bar w\in[\Lambda,
\Lambda]$ in this case.
Clearly, $e_i\notin [\Lambda,\Lambda]$, for $1\le i\le n$. If we assume in addition that $\Lambda$ has finite global dimension, we have more: In this case, since $H_0(\Lambda)$ is spanned as a vector space by the residue classes of $e_1,\ldots, e_n$, every proper 
cycle  $w$ of $Q$  has the property that its residue class $\bar w$ 
in $\Lambda$ belongs to
$[\Lambda,\Lambda]$.

\medskip

The following result was first shown in \cite{L} and reproved in \cite{I}. 
 This theorem is usually referred to as the 
no-loop conjecture. (Actually a stronger result is shown in \cite{L}. We
refer to this paper for details.)

\begin{theorem} Let $\Lambda=kQ/I$ be a finite dimensional algebra 
such that $Q$ 
contains a loop.
Then $\gldim\Lambda=\infty.$
\end{theorem}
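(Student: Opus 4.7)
\noindent
The plan is to argue the contrapositive: assume $\gldim \Lambda = d < \infty$ and deduce that $Q$ contains no loop. Suppose for contradiction that $\alpha$ is a loop at some vertex $i$. By the observation immediately preceding the theorem (which rests on Theorem \ref{Hochschild}), since $\alpha$ is a proper cycle, $\bar\alpha \in [\Lambda,\Lambda]$. Using that $\bar\alpha = e_i \bar\alpha e_i$, this yields $\bar\alpha \in e_i\,[\Lambda,\Lambda]\,e_i$.

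The crucial step is to establish the inclusion
\[
e_i\,[\Lambda,\Lambda]\,e_i \;\subseteq\; \text{rad}^2\,\Lambda.
\]
For any $x,y\in\Lambda$, decomposing $x = \sum_{a,b} e_a x e_b$ and similarly $y$, I would expand
\[
e_i[x,y]e_i \;=\; \sum_{b} \Bigl((e_i x e_b)(e_b y e_i) - (e_i y e_b)(e_b x e_i)\Bigr).
\]
For $b\neq i$, each of the four factors lies in $\text{rad}\,\Lambda$ (since $e_a\Lambda e_b \subseteq \text{rad}\,\Lambda$ whenever $a\neq b$), so the corresponding summand lies in $\text{rad}^2\,\Lambda$. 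For $b=i$, the algebra $e_i\Lambda e_i$ is local with maximal ideal $e_i\,\text{rad}\,\Lambda\,e_i$ (because $e_i$ is primitive and $k$ is algebraically closed); writing $e_i x e_i = c_x e_i + x'$ and $e_i y e_i = c_y e_i + y'$ with $x',y' \in e_i\,\text{rad}\,\Lambda\,e_i$, the $e_i$ parts are the unit of $e_i\Lambda e_i$ and therefore cancel out of the commutator, giving $[e_i x e_i, e_i y e_i] = [x',y'] \in \text{rad}^2\,\Lambda$.

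Putting the two steps together, $\bar\alpha \in \text{rad}^2\,\Lambda$. But the admissibility of $I$ (namely $I \subseteq Q_{\ge 2}$) guarantees the standard fact that the arrows of $Q$ descend to a $k$-basis of $\text{rad}\,\Lambda/\text{rad}^2\,\Lambda$, so in particular $\bar\alpha \notin \text{rad}^2\,\Lambda$. This contradicts the previous conclusion and forces $\gldim \Lambda = \infty$. The main technical obstacle is the inclusion $e_i\,[\Lambda,\Lambda]\,e_i \subseteq \text{rad}^2\,\Lambda$; every other ingredient is either already prepared in the excerpt or is a standard consequence of the admissibility of $I$.
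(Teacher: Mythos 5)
Your proof is correct and takes essentially the same route as the paper: finite global dimension plus Keller's theorem (Theorem \ref{Hochschild}) puts the class of the loop $\bar\alpha$ into $[\Lambda,\Lambda]$, and the contradiction is then extracted from the quiver presentation. The inclusion $e_i[\Lambda,\Lambda]e_i\subseteq \mathrm{rad}^2\Lambda$, combined with the fact that arrows are linearly independent modulo $\mathrm{rad}^2\Lambda$, is precisely the step the paper leaves as ``readily checked,'' so your write-up just makes the paper's sketch explicit.
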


\medskip

\noindent
\begin{proof} We give a sketch of the proof, and we use \ref{Hochschild}. Assume 
to the contrary that $\Lambda$ is of finite global dimension and let $\alpha$ 
be the loop in $Q$. 
Let $e_1,\dots,e_n\in\Lambda$ 
be the elements corresponding to the vertices of $Q$. By the previous remarks, the residue classes of
$\bar e_1,\dots,\bar e_n$ form a 
basis of $\Lambda/[\Lambda,\Lambda]$,
and $\bar\alpha\in[\Lambda,\Lambda]$. 
But it is
readily checked that this yields a contradiction, hence 
$\gldim\Lambda=\infty$.
\end{proof}

There is a local version of the no-loop conjecture called the strong no-loop 
conjecture. This was 
only proved recently in \cite{ILP} and is formulated as follows.

\begin{theorem} Let $\Lambda=kQ/I$ be a finite dimensional algebra 
such that $Q$ contains 
a loop at the vertex $i$.
Then the simple module $S(i)$ has infinite projective dimension.
\end{theorem}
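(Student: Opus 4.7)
The plan is to argue by contradiction: suppose $\pd_\Lambda S(i)=d<\infty$, and derive an incompatibility with the presence of the loop $\alpha$ at vertex $i$. First I would take a minimal projective resolution
$$0 \to P_d \to \cdots \to P_1 \to P_0 \to S(i) \to 0$$
with $P_0=P(i)=\Lambda e_i$. Since $\bar\alpha$ lies in the Jacobson radical of $\Lambda$, right multiplication by $\alpha$ on $P(i)$ is a $\Lambda$-endomorphism which induces zero on $S(i)$; by the comparison theorem it lifts to a chain endomorphism $\phi_\bullet\colon P_\bullet\to P_\bullet$ that is null-homotopic, with $\phi_0=R_\alpha$.

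The heart of the argument is to attach to $\phi_\bullet$ an invariant which can be computed in two incompatible ways. The naive candidate, the Hattori--Stallings trace with values in $\Lambda/[\Lambda,\Lambda]$, already vanishes on $\bar\alpha$ by the observation preceding Theorem \ref{Hochschild} (the loop is a proper cycle), so if we had $\gldim\Lambda<\infty$ the global trace would give no new information, and in the present setting we have even less. One must therefore construct a strictly finer, local invariant, sensitive to the vertex $i$ and taking values in some subquotient of the local algebra $A=e_i\Lambda e_i$. Given such an invariant, the contradiction will have the usual Euler-characteristic shape: the null-homotopy of $\phi_\bullet$ forces the alternating sum of its levelwise invariants to vanish, while the direct evaluation on $\phi_0=R_\alpha$ returns the class of $\bar\alpha$ modulo a controlled subspace, and this class is nonzero because $\alpha$ is an arrow of $Q$, hence linearly independent from $e_i$ modulo $\text{rad}^2 A$.

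The main obstacle is precisely the construction of this localized trace. It must be sharp enough to register the loop at $i$, yet it must be forced to vanish using only $\pd_\Lambda S(i)<\infty$ rather than the stronger $\gldim\Lambda<\infty$ (which would trivialize the problem via the ordinary no-loop theorem). This is exactly the technical advance of \cite{ILP}, and it explains why the strong no-loop conjecture eluded for many years the short Hochschild-homology proof that suffices for its global counterpart.
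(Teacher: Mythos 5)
There is a genuine gap here, and you name it yourself: the entire content of the theorem is the construction of the invariant you call the ``localized trace,'' and your proposal does not construct it. Everything you do write down is either standard or diagnostic. The setup (minimal resolution of $S(i)$, lifting right multiplication by $\bar\alpha$ to a null-homotopic chain endomorphism $\phi_\bullet$ with $\phi_0=R_{\bar\alpha}$) is correct but is the common starting point of all trace-style arguments; the observation that the global Hattori--Stallings trace in $\Lambda/[\Lambda,\Lambda]$ cannot work is also correct, precisely because that argument needs $H_0(\Lambda)=k^{|Q_0|}$, i.e.\ finiteness of the global dimension via Theorem \ref{Hochschild}, whereas here one only has $\pd_\Lambda S(i)<\infty$. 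But after identifying the obstacle you simply assert that a finer invariant with the two required properties (sensitivity to the arrow $\alpha$ modulo $\mathrm{rad}^2$ of the corner algebra $e_i\Lambda e_i$, and vanishing of the alternating sum forced by the null-homotopy under the local hypothesis alone) must exist, and you attribute its existence to \cite{ILP}. Appealing to the paper that proves the theorem is not a proof; as it stands, the proposal is a plausible strategy outline with its decisive step left blank, and nothing in it rules out the possibility that no such invariant exists.

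For context: the survey you are reading does not prove this theorem either. It proves (in sketch) only the classical no-loop theorem, exactly by the Hochschild-homology/commutator argument you correctly dismiss as insufficient for the local statement, and it cites \cite{ILP} for the strong version, emphasizing that it was settled only recently. So your diagnosis of why the easy argument fails agrees with the paper's framing, but to count as a proof your proposal would have to actually carry out the Igusa--Liu--Paquette construction (their suitably localized trace/Hochschild-theoretic argument at the vertex $i$) and verify both of the properties you require of it, rather than postulate them.
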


A more general version of the strong no-loop conjecture is still open 
and is called the extension conjecture in \cite{ILP}.

\smallskip

\noindent
{\bf Extension conjecture} {\it Let $\Lambda=kQ/I$ be a finite 
dimensional algebra such 
that $Q$ contains a loop at the vertex $i$, then 
$\Ext^j_\Lambda(S(i),S(i))\neq 0$ for 
infinitely many $j$.}

\medskip

We point out that the more general question, whether 
$\Ext^1_\Lambda(S,S)\neq 0$ for a  $\Lambda$-module $S$ implies 
$\Ext^j_\Lambda(S,S)\neq 0$ for all $j$, has a negative answer. 
For this
consider the algebra $\Lambda$ given by the following quiver with 
relations (see also \cite{GSZ})

\begin{center}

\includegraphics[scale=1.1]{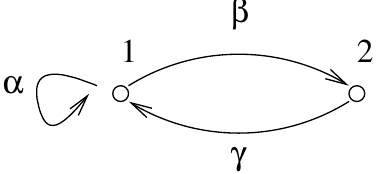}

\end{center}

\noindent
and relations $\alpha^2-\beta\gamma,\gamma\alpha\beta,\gamma\beta$.
Then the minimal projective resolution of $S(1)$ is easily computed as
$$\dots \to P(2)\to  P(1)\to P(1)\oplus P(2)\to P(1)\to S(1)\to 0$$

\noindent
and $\Omega^4S(1)=S(1)$. So $\Ext_\Lambda^j(S(1),S(1))=0$ for 
$j\equiv 3\, \text{mod}\,4$. But the extension conjecture clearly holds
in this example.

\medskip

We refer to \cite{GSZ} and \cite{LM} for some classes of algebras where the stronger version
of the extension conjecture will hold.

\section{Constructions}\label{constr}

In the previous section we have seen that one obstruction on $Q$ for
being the quiver of an algebra of finite global dimension was the existence of
a loop in $Q$. Here we will show that this is the only obstruction.
We are grateful to C. M. Ringel for pointing out to us that this construction
had already been carried out in \cite{DR1} and also in \cite{DR2} even in 
the more general setting of a species. We will come back to those papers also 
in the next  section.
For the convenience of the reader we give the construction of such an 
algebra of finite global dimension starting from an arbitrary quiver without
loops, but refer for a detailed proof to \cite{DR1}, \cite{DR2} or \cite{P}.
In general there will be several such algebras of finite global dimension
having the same quiver. This will be the subject of the next section.

\begin{theorem}\label{mono} Let $Q$ be a quiver without loops. 
Then there exists an admissible
ideal $I$ such that $\gldim\,kQ/I\le 2$.
\end{theorem}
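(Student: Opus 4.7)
The plan is to exhibit an admissible ideal $I \subseteq kQ$ such that the quotient $\Lambda = kQ/I$ has the property that the second syzygy $\Omega^2 S(i)$ is projective (or zero) for every simple $\Lambda$-module $S(i)$; this will force $\pd S(i)\le 2$ for all $i$, hence $\gldim\Lambda\le 2$.

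First I would fix a linear order on the vertices of $Q$ and define $I$ by a carefully chosen set of relations on compositions of arrows---typically length-two monomial relations $\alpha\beta=0$ selected according to the order, possibly supplemented by non-monomial relations that equate or twist different length-two compositions. These relations should be designed so that, for each vertex $i$, the radical $\operatorname{rad} P(i)$ decomposes as a direct sum of cyclic submodules, each of which is isomorphic either to a projective module $P(j)$ or to a simple module $S(j)$ whose own projective cover has projective radical. A natural starting point is to choose a subset $S$ of arrows whose removal makes $Q$ acyclic and to kill, roughly, compositions $\mu\sigma$ with $\sigma\in S$; this works cleanly for many quivers, though in the most delicate cases (e.g.\ quivers with many overlapping short cycles) additional relations are needed.

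Next I would verify that $I$ is admissible: since $Q$ is finite and has no loops, one argues that every path of length greater than some $N$ (which can be taken linear in $|Q_0|$) must contain a subpath appearing among the defining relations, so $Q_{\ge N}\subseteq I$; in particular $\Lambda$ is finite-dimensional.

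Finally, I would verify $\gldim\Lambda\le 2$ by computing $\Omega S(i)=\operatorname{rad} P(i)$ from the prescribed decomposition, and then $\Omega^2 S(i)$, which by the construction will be a direct sum of projectives. The hard part is the combinatorial choice of relations: one must simultaneously ensure admissibility, finite dimensionality, and the two-step projective-resolution structure, and this generally requires some case analysis depending on the shape of $Q$. The full construction is worked out in \cite{DR1}, \cite{DR2}, and \cite{P}, to which I refer for the detailed verification.
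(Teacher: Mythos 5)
Your overall strategy (choose $I$ so that $\Omega^2S(i)$ is projective for every vertex $i$) is exactly the right one, but the entire content of the theorem is the explicit choice of $I$, and your proposal never actually produces one. You describe the desired outcome, offer one concrete recipe, concede that ``in the most delicate cases additional relations are needed,'' and then refer to \cite{DR1}, \cite{DR2}, \cite{P} for ``the full construction''---which is precisely the statement to be proved, so this is a genuine gap rather than a deferred routine verification. Worse, the one recipe you do spell out (pick a set $S$ of arrows whose removal makes $Q$ acyclic and kill all compositions $\mu\sigma$ with $\sigma\in S$) cannot be completed as stated. With these quadratic monomial relations and the paper's conventions, under which $\operatorname{rad}P(i)=\bigoplus_{\tau,\,s(\tau)=i}\langle\bar\tau\rangle$, the submodule generated by an arrow $\tau\colon i\to j$ is $M(j):=P(j)/\langle\bar\sigma\mid\sigma\in S,\ s(\sigma)=j\rangle$, so $\Omega S(i)\cong\bigoplus_{\tau\colon i\to j}M(j)$ and $\Omega M(j)\cong\bigoplus_{\sigma\in S,\ s(\sigma)=j}M(e(\sigma))$. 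Hence if $S$ contains two composable arrows $\sigma_1\colon j_0\to j_1$ and $\sigma_2\colon j_1\to j_2$, then $M(j_1)$ is a nonzero proper quotient of $P(j_1)$ with top $S(j_1)$, hence not projective, so $M(j_0)$ has projective dimension at least $2$ and any simple $S(i)$ with an arrow $i\to j_0$ has projective dimension at least $3$. For the quiver with three vertices and all six arrows $i\to j$, $i\neq j$, every admissible choice of $S$ must contain one arrow from each of the three $2$-cycles, and any such choice contains two composable arrows; so no choice of $S$ gives global dimension at most $2$. (A careless choice, e.g.\ both arrows of a $2$-cycle, even gives infinite global dimension, as in the example $\langle\alpha\beta,\beta\alpha\rangle$ from Section 1.)

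The missing idea is that the relations must be chosen as \emph{pairs} of arrows adapted to a total order on the vertices, not as ``anything followed by an arrow from a fixed set.'' Order the vertices $1<\dots<n$ and, at each vertex $i$, put into $I$ every composition $\alpha\beta$ where $\alpha$ is an arrow ending at $i$ with $s(\alpha)<i$ and $\beta$ is an arrow starting at $i$ with $e(\beta)<i$. Admissibility is clear: a path avoiding all such ``peaks'' must first descend and then ascend in the order, so it has length at most $2(n-1)$. For the global dimension one checks exactly the structure you were aiming at: an arrow $i\to j$ with $j<i$ generates a copy of $P(j)$ inside $\operatorname{rad}P(i)$, since no relation begins with an arrow arriving at $j$ from above; an arrow $i\to j$ with $j>i$ generates $P(j)$ modulo the submodule generated by the arrows leaving $j$ downward, and that submodule is a direct sum of projectives by the first observation. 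Hence $\Omega^2S(i)$ is projective and $\gldim kQ/I\le 2$, with no case analysis on the shape of $Q$.
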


\begin{proof}
We only give a sketch of the proof. Let $Q_0=\{1,\dots,n\}$ be the set of vertices of $Q$. For $2\le i\le n$ let $\alpha_{ij_i}$ 
be the arrows of $Q$ 
such that $e(\alpha_{ij_i})=i,$  $s(\alpha_{ij_i})<i$ and 
$\beta_{im_i}$ the arrows of $Q$ such
that $s(\beta_{im_i})=i$, $e(\beta_{im_i})<i$.
Let $I$ be the two sided
ideal in $kQ$ generated by $\alpha_{ij_i}\beta_{im_i}$ for
$2\le i\le n$ and all $j_i,m_i$.
Then $I$ is an admissible ideal and the algebra $\Lambda=kQ/$ is a finite dimensional monomial algebra. Using \cite{GHZ} one can easily show that
$\gldim\Lambda\le 2$.
\end{proof}

One can even show that the algebra constructed in the proof is 
strongly quasi-hereditary in the sense of 
Ringel. For a definition we refer to Section \ref{prelim}.
\medskip

We illustrate the previous theorem with a specific example. We will use the following notation.
The first set of arrows occurring in the sketch of the proof of \ref{mono}
is denoted by $\alpha_{i*}$ while the second is
denoted by $\beta_{i*}$ for a vertex $i$ of $Q$.  
Let $Q$ be the following quiver:

\begin{center}

\includegraphics[scale=1.0]{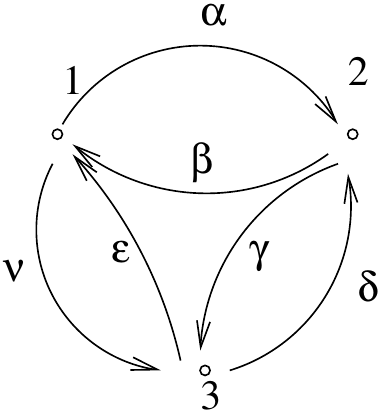}

\end{center}

\noindent      
Thus $\alpha_{2*}=\{\alpha\},\beta_{2*}=\{\beta\}$
and $\alpha_{3*}=\{\nu,\gamma\},\beta_{3*}=\{\delta,\epsilon\}$.

\noindent
Let $I=\langle\alpha\beta,\gamma\delta,\gamma\epsilon,
\nu\delta,\nu\epsilon\rangle$ and  $\Lambda = kQ/I$.

Then one can read the indecomposable projective
$\Lambda$-modules and their Loewy series of  from the following diagrams:

\medskip

\begin{center}

\includegraphics[scale=0.8]{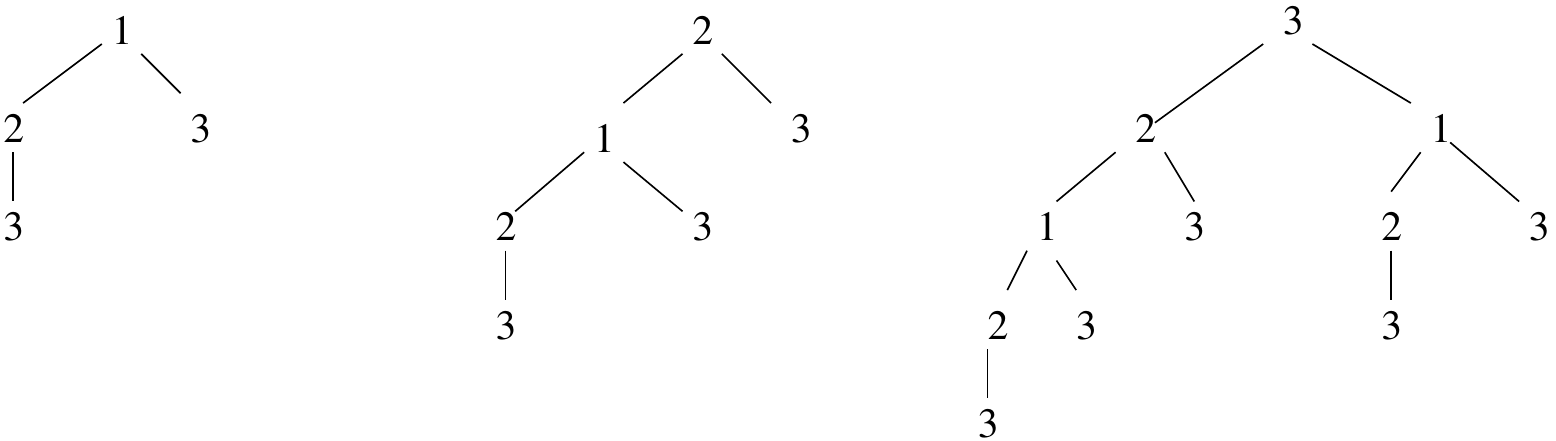}

\end{center}

The following exact sequences are the minimal projective 
resolutions of the simple $\Lambda$-modules

\begin{center}
$0\to P(1)^2\oplus P(2)\to P(2)\oplus P(3)\to P(1)\to S(1)\to 0$

\medskip

$0\to P(1)\oplus P(2)\to P(3)\oplus P(1)\to P(2)\to S(2)\to 0$

\medskip

$0\to P(1)\oplus P(2)\to P(3)\to S(3)\to 0.$
\end{center}

The standard modules are given by
$$\Delta(1)=P(1), \Delta(2)=P(2)/P(1)\, \text{and}\, \Delta(3)= S(3).$$

One may ask even more detailed questions, such as the existence of
an admissible ideal $I$ in a given quiver $Q$ such that
the global dimension of $kQ/I$ is a prescribed 
natural number. Trivially one needs more conditions. 
For example if there is no path of length two, then 
$\gldim\,kQ/I\le 1$. Some answers are given in \cite{P}, but 
the precise nature of the needed conditions remains unclear to us. For example
the following can be shown (see for example \cite{P}).
 
\begin{theorem} Let $Q$ be a quiver without loops containing
a path of length $d$ that consists of $d$ pairwise distinct arrows. 
Then
there exists an admissible ideal $I$ generated by paths such that
$\gldim\,kQ/I=d$.
\end{theorem}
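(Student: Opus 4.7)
The plan is to construct an admissible monomial ideal $I\subseteq kQ$ for which $\gldim(kQ/I)=d$, with the given path $p=\alpha_1\alpha_2\cdots\alpha_d$ serving as a backbone that produces a simple module of projective dimension exactly~$d$.

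As a first approximation I would take $I_0=\langle\alpha_1\alpha_2,\alpha_2\alpha_3,\ldots,\alpha_{d-1}\alpha_d\rangle$, the ideal generated by the $d-1$ length-two subpaths of $p$. Since the arrows $\alpha_1,\ldots,\alpha_d$ are pairwise distinct, on an acyclic quiver $Q$ this $I_0$ is already admissible, and one verifies directly that $\gldim(kQ/I_0)=d$ by computing the minimal projective resolution of $S(v_0)$ with $v_0=s(\alpha_1)$: the syzygies walk stepwise along $p$, giving $\Omega^j S(v_0)=S(v_j)$ for $0<j<d$, and the chain terminates after exactly $d$ steps because the submodule of $P(v_{d-1})$ generated by $\alpha_d$ is isomorphic to $P(v_d)$ (no length-two relation of $I_0$ applies beyond $\alpha_{d-1}\alpha_d$). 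A parallel syzygy analysis shows every other simple has projective dimension at most~$d$, so $\gldim\Lambda=d$.

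On a general quiver $Q$ with cycles, $I_0$ may fail to be admissible: any oriented cycle of $Q$ that is not already cut by one of the relations $\alpha_i\alpha_{i+1}$ produces arbitrarily long nonzero paths in $kQ/I_0$. In that case I would augment $I_0$ by adjoining, for each such uncut cycle, a carefully chosen length-two monomial relation that cuts it at the right place, giving an admissible ideal $I\supseteq I_0$. The main obstacle is to make these augmenting choices \emph{uniformly} so that $\gldim(kQ/I)$ remains exactly $d$, neither infinite nor larger than $d$: the augmentation must (i) leave intact the spine resolution of $S(v_0)$ along $p$, and (ii) prevent any simple outside of $p$ from acquiring a syzygy chain of length greater than~$d$. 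Explicit Nakayama-type patterns on an oriented $n$-cycle (for instance cutting the cycle by $d-1$ consecutive relations $\alpha_j\alpha_{j+1}$ ending with the one that wraps back to $\alpha_1$) illustrate the general form of the augmentation. Verifying that a uniform choice works for arbitrary $Q$, $p$ and $d$ rests on a careful case analysis of minimal projective resolutions for monomial algebras using the combinatorial description of \cite{GHZ}, and is the heart of the argument; the complete verification is carried out in \cite{P}.
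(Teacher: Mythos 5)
There is a genuine gap, and you have in effect conceded it yourself: the paper does not prove this theorem either (it only states it with a reference to \cite{P}), and your proposal likewise delegates ``the heart of the argument'' --- the uniform choice of augmenting relations for a quiver with oriented cycles --- to \cite{P}. But that general case \emph{is} the theorem; the acyclic case is easy and essentially uninteresting here. Moreover, the difficulty is not a routine bookkeeping matter that a ``carefully chosen length-two relation per uncut cycle'' can be expected to settle. Note first that the distinguished path $p=\alpha_1\cdots\alpha_d$ may itself have repeated vertices, indeed may be an oriented cycle: in that situation your spine relations $\alpha_1\alpha_2,\dots,\alpha_{d-1}\alpha_d$ together with any length-two monomial relation cutting the wrap-around composition turn $p$ into a truncated $2$-cycle, so $\gldim kQ/I=\infty$ by \ref{BHM}; requirement (i), that the augmentation ``leave the spine resolution intact,'' is then unachievable, and the relation scheme along $p$ itself has to be redesigned. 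Second, even for cycles disjoint from $p$, monomial cuts easily overshoot the bound $d$: the paper's example on the oriented $3$-cycle with $I=\langle\gamma\alpha\beta,\alpha\beta\gamma\alpha\rangle$ has global dimension $4$, and Gustafson's bound shows cyclic Nakayama-type cuts can push the global dimension up to $2n-2$; controlling this for all simples simultaneously, in the presence of arbitrarily interlocking cycles, is exactly the case analysis you defer. So as it stands the proposal is a plausible strategy plus a citation, not a proof.

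A smaller point: even in the acyclic case your syzygy claim $\Omega^jS(v_0)=S(v_j)$ is false whenever some $v_j$ has arrows other than $\alpha_{j+1}$ leaving it. What is true (and suffices) is that, $\Lambda$ being monomial, $\operatorname{rad}P(v_j)$ splits as the direct sum of the cyclic submodules generated by the arrows at $v_j$, the summands for arrows not on $p$ are projective, and the nonprojective part of $\Omega^{j+1}S(v_0)$ is the cyclic module generated by $\alpha_{j+1}$; downward induction from the projective summand generated by $\alpha_d$ then gives $\pd_\Lambda S(v_0)=d$, and the overlap calculus of \cite{GHZ} bounds the other simples. This is repairable, but it should be stated correctly, since in the general (cyclic) case precisely these cyclic syzygy modules are where the augmenting relations interfere with the spine.
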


We point out that the converse is not true. Consider the algebra
$\Lambda$ given by the cyclic quiver $Q$ on three vertices $1,2,3$
and arrows $\alpha,\beta,\gamma.$  

\begin{center}

\includegraphics[scale=0.8]{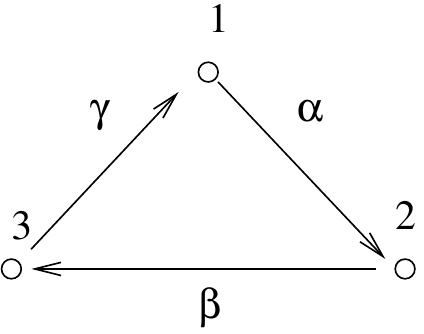}

\end{center}

\noindent
Choose as admissible ideal of $kQ$ the ideal $I=\langle\gamma\alpha\beta,
\alpha\beta\gamma\alpha\rangle$.
Then it is a straightforward computation to show that the following 
are minimal projective resolutions of the simple modules over $kQ/I$:
$$0\to P(3)\to P(2)\to P(2)\to P(1)\to S(1)\to 0$$
$$0\to P(3)\to P(2)\to S(2)\to 0$$
$$0\to P(3)\to P(2)\to P(3)\to P(1)\to P(3)\to S(3)\to 0.$$

\noindent
We have $\gldim\,kQ/I=4$, but there is no path in $Q$
consisting of four pairwise different arrows. However, we can say the following:

\begin{proposition}\label{triv}
Let $Q$ be a quiver without loops and $\Lambda=kQ/I$ be a finite 
dimensional algebra with $\gldim\,\Lambda =d<\infty.$ Then there 
exists a path of length $d$ in $Q.$
\end{proposition}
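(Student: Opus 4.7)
The plan is to bound the projective dimension of each simple $\Lambda$-module by the length of a longest path in $Q$ at the corresponding vertex, and then read off a path of length $d$ from a simple that attains projective dimension $d$. First I would reduce to the case of $Q$ acyclic: if $Q$ contains an oriented cycle then $Q$ already has paths of every length and the conclusion is immediate. Henceforth assume $Q$ is acyclic, and for each vertex $i$ let $\ell(i)$ denote the length of the longest path in $Q$ ending at $i$; this is a well-defined non-negative integer, since acyclicity forces $\ell(i)\le |Q_0|-1$.

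The core claim is that $\pd S(i)\le \ell(i)$ for every vertex $i$, proved by induction on $\ell(i)$. The base case $\ell(i)=0$ is immediate: no path of positive length ends at $i$, so $P(i)=\Lambda e_i$ has basis $\{\bar e_i\}$ and coincides with $S(i)$. For the inductive step I would exploit the radical filtration of $P(i)$: the layer $\text{rad}^k P(i)/\text{rad}^{k+1}P(i)$ is spanned by the residue classes of the paths in $Q$ of length $k$ ending at $i$, and a path from $j$ to $i$ contributes a copy of $S(j)$. Hence every composition factor $S(j)$ of $\text{rad}\,P(i)$ is witnessed by a nonzero residue class of a path in $\Lambda$ of length at least one from $j$ to $i$; concatenating a longest path ending at $j$ with such a path produces a path ending at $i$ that is strictly longer, forcing $\ell(j)\le \ell(i)-1$. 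The inductive hypothesis then yields $\pd S(j)\le \ell(i)-1$ for every such composition factor, and the standard bound $\pd M\le \max\{\pd S:S\text{ a composition factor of }M\}$ (proved by induction on the composition length using the long exact $\Ext$-sequence) gives $\pd \text{rad}\,P(i)\le \ell(i)-1$. The short exact sequence $0\to \text{rad}\,P(i)\to P(i)\to S(i)\to 0$ then delivers $\pd S(i)\le \ell(i)$, completing the induction.

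To conclude, since $\gldim\Lambda=d$ there exists a simple module $S(i_0)$ with $\pd S(i_0)=d$; by the bound just proved $\ell(i_0)\ge d$, so $Q$ contains a path of length at least $d$ ending at $i_0$, and any initial subpath of length $d$ supplies the path demanded by the proposition. The main technical point is the identification of the composition factors of $\text{rad}\,P(i)$ in terms of paths in $Q$, which is forced by the basis description of $\Lambda e_i$; once that is in place, the rest of the argument is routine homological bookkeeping.
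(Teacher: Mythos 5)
Your proof is correct and rests on the same key fact as the paper's: once one reduces to the case where $Q$ has no oriented cycle (you do this up front, the paper by noting that bounded path length forces acyclicity), everything follows from the bound of $\gldim kQ/I$ by the maximal path length, which the paper simply invokes from its preliminaries in a short contradiction argument, while you prove it from scratch via the vertex-local estimate $\pd S(i)\le\ell(i)$. So the route is essentially the paper's, just written directly and made self-contained by including a proof of that standard bound.
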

\begin{proof} Assume to the contrary that $Q$ does not contain a path of 
length $d.$ Then the length of all the paths in $Q$ is bounded by $d-1$.
In particular we conclude that $Q$ does not contain an oriented cycle,
hence is directed. But we have remarked before in Section \ref{prelim}
that then  $\gldim\,\Lambda\le d-1$, a contradiction. So $Q$ contains
a path of length $d$.
\end{proof} 

We present now
another result in the spirit of \ref{mono}. 

\begin{theorem}\label{gldim} Let $Q$ be a quiver without loops 
containing $n$ vertices. Then there exists a monomial algebra 
$\Lambda=kQ/I$ with $\gldim\,\Lambda\le n.$
\end{theorem}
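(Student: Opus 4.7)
The plan is to proceed by induction on the number $n$ of vertices. The base case $n=1$ is immediate: a loop-free quiver on one vertex has no arrows, so $\Lambda = k$ is monomial with $\gldim = 0$. For the inductive step, pick any vertex $v$ of $Q$ and let $Q'$ be the full subquiver on $Q_0 \setminus \{v\}$; it has $n-1$ vertices and remains loop-free. By induction there is a monomial admissible $I_0 \subseteq kQ'$ with $\gldim(kQ'/I_0) \le n-1$. Define the monomial admissible ideal
$$I = I_0 + \langle \alpha\beta \mid \alpha \text{ ends at } v,\ \beta \text{ starts at } v \rangle \subseteq kQ,$$
and put $\Lambda = kQ/I$. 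Finite-dimensionality of $\Lambda$ follows from that of $kQ'/I_0$ plus the observation that nonzero paths in $\Lambda$ cross $v$ at most once.

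First I would handle $S(v)$. The radical of $P(v) = \Lambda e_v$ decomposes as $\bigoplus_\alpha \Lambda\alpha$, summed over arrows $\alpha$ ending at $v$. A direct check shows $\Lambda\alpha \cong P(s(\alpha))$: the annihilator of $\alpha$ is zero because no forbidden $\alpha'\beta'$ subword can appear at the tail of $w\alpha$ for $w \in P(s(\alpha))$, since the last arrow $\alpha$ is an ``in-arrow'' at $v$ and not a ``$\beta$-type'' arrow. Hence $\mathrm{rad}\,P(v)$ is projective and $\pd S(v) \le 1$.

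Next I would bound $\pd S(u) \le n$ for $u \neq v$. Decompose $\mathrm{rad}\,P(u) = \bigoplus_\eta \Lambda\eta$ over arrows $\eta$ ending at $u$. Arrows starting at $v$ give $\Lambda\eta \cong S(v)$ (every postcomposition triggers an $\alpha'\beta'$ relation), so such summands have projective dimension $\le 1$. Arrows $\eta$ lying in $Q'$ give cyclic submodules fitting in a natural short exact sequence
$$0 \to S(v)^M \to \Lambda\eta \to \Lambda'\eta \to 0,$$
where $\Lambda' = kQ'/I_0$ and the subobject collects ``type-B'' paths that start with an out-arrow of $v$. Combining the inductive $\gldim\Lambda' \le n-1$ with the $S(v)$-bound should yield $\pd S(u) \le n$.

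The main obstacle is this final step: a naive change-of-rings estimate gives only $\pd_\Lambda S(u) \le n + 1$, off by one, because $\Lambda'$ itself has $\Lambda$-projective dimension $2$ rather than $1$ (the two-sided ideal $\Lambda e_v \Lambda$ is not projective but only has pd $\le 1$). To reach the sharp bound $n$ one must either track syzygies carefully, showing that the $S(v)$-summands appearing inside each $\Omega^i S(u)$ only shift dimensions rather than add to them, or take the conceptually cleaner route of verifying that the construction above produces a strongly quasi-hereditary algebra (as the remark following Theorem \ref{mono} already asserts for the Dlab--Ringel construction, which our construction essentially specialises to) and invoking Ringel's bound $\gldim \le n$ for strongly quasi-hereditary algebras recalled in Section \ref{prelim}. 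The second route sidesteps all the syzygy bookkeeping and is in that sense the natural proof.
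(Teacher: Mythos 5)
Your preliminary analysis is correct as far as it goes: for your ideal $I=I_0+\langle\alpha\beta\mid e(\alpha)=v,\ s(\beta)=v\rangle$ one does get $\mathrm{rad}\,P(v)=\bigoplus_\alpha\Lambda\alpha$ with $\Lambda\alpha\cong P(s(\alpha))$, hence $\pd_\Lambda S(v)\le 1$, and the exact sequences $0\to S(v)^M\to\Lambda\eta\to\Lambda'\eta\to 0$ are as you describe. But the proof is not finished, and you say so yourself: what you actually establish is $\gldim\Lambda\le\gldim\Lambda'+2\le n+1$, and neither of your two proposed repairs is carried out. Worse, the induction as you have set it up cannot deliver the sharp step at all: the inductive hypothesis hands you an \emph{arbitrary} monomial $I_0$ with $\gldim kQ'/I_0\le n-1$, and from that the only bound you derive is the ``$+2$'' one (this is essentially the heredity-ideal estimate, which iterated gives roughly $2n-2$, the quasi-hereditary bound of \ref{DR}, not $n$). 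Your second escape route does not mesh with this induction: strong quasi-heredity of $\Lambda$ is a property of the whole chain of relations, so you would have to strengthen the inductive hypothesis, i.e. fix the recipe at every stage. If you do that, your relations, fully unravelled, become exactly $\{\alpha\beta\mid e(\alpha)=i=s(\beta),\ s(\alpha)<i,\ e(\beta)<i\}$ — the construction of Theorem \ref{mono} — so the argument collapses to quoting that theorem ($\gldim\le 2$), and the strong quasi-heredity you want to ``verify'' is itself only asserted, not proved, in the paper. That is a legitimate (if cheap) way to get the literal statement for $n\ge 2$, but it is not a proof of the bound $n$ for your inductive step, and the syzygy bookkeeping needed for your first route is precisely the missing content.

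For contrast, the paper's construction is different and avoids the off-by-one problem by design: order the vertices $1,\dots,n$ and, for every arrow $\alpha$ with $s(\alpha)<e(\alpha)$, impose $\alpha\beta=0$ for \emph{every} arrow $\beta$ with $s(\beta)=e(\alpha)$. With these relations each cyclic module $\Lambda\eta$ (any arrow $\eta$) is isomorphic to $U(j):=P(j)/\sum_\gamma\Lambda\gamma$, the sum over arrows $\gamma$ into $j$ with $s(\gamma)<j$, and one gets exact sequences $0\to\bigoplus U(l)\to P(j)\to U(j)\to 0$ with all $l<j$. This is a single descending induction with a one-step (not two-step) drop, giving $\pd_\Lambda S(i)\le n-i+1$ directly and hence $\gldim\Lambda\le n$. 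If you want to salvage your own construction, you would need an analogous telescoping argument showing that the copies of $S(v)$ created at each stage are absorbed into the $\Lambda$-projective covers rather than adding a dimension; that is plausible (small examples telescope), but it is exactly the step your proposal leaves open.
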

\begin{proof} Let $Q_0=\{1,\dots,n\}$ be the set of vertices of $Q$. 
For each vertex $1\le i<n$ 
consider the following set of arrows:
$$\{\alpha_{ij}\,|\,s(\alpha_{ij})=i\,\,
\text{and}\,\,e(\alpha_{ij})>i\}$$

\noindent If $\beta$ is an arrow with $s(\beta)=e(\alpha_{ij})$ for some $\alpha_{ij},$
then $\alpha_{ij}\beta\in I.$ This defines $\Lambda$. It is easy to verify that
$\text{proj.dim}_\Lambda S(i)\le n-i+1$
\noindent for $1\le i\le n,$ so 
$\gldim\Lambda\le n.$
\end{proof}

We consider an example. Let $Q$ be the quiver from the example following
\ref{mono}. Then $I=\langle\alpha\gamma,\alpha\beta,\nu\delta,\nu\epsilon,
\gamma\delta,\gamma\epsilon\rangle$.
The indecomposable projective $\Lambda$-modules and their Loewy series can be read from the following diagrams:

\begin{center}

\includegraphics[scale=1.1]{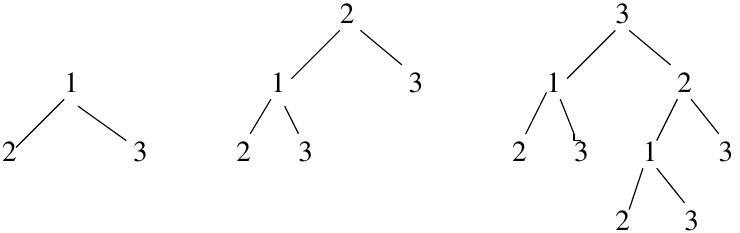}

\end{center}

\noindent The standard modules are again given by
$$\Delta(1)=P(1), \Delta(2)=P(2)/P(1)\, \text{and}\, \Delta(3)= S(3).$$
\noindent
Note that also in this case we obtain a strongly quasi-hereditary algebra.

\section{Bounds}\label{bounds}

Unless otherwise stated, $Q$ will denote in this section a quiver with no loops. We will deal with the 
following set of algebras:
$$\mathcal A(Q)=\{kQ/I\,|\,\text{dim}_k\,kQ/I<\infty\,\,\text{and}
\,\,\gldim\,kQ/I<\infty\}$$

\noindent By \ref{mono}
we know that $\mathcal A(Q)\neq \emptyset$.
We point out that $\mathcal A(Q)$ usually will contain an infinite
number of non-isomorphic algebras as the following example from \cite{Bo}
shows. Consider the following quiver $Q$

\begin{center}

\includegraphics[scale=1.0]{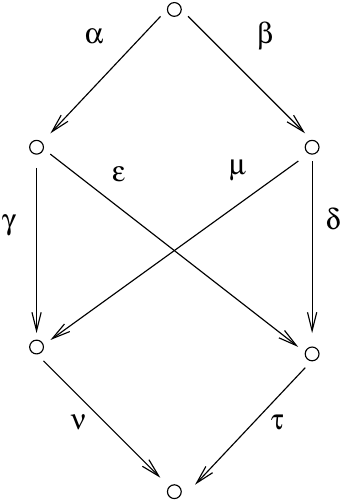}

\end{center}

\noindent
and $I_q=\langle\alpha\epsilon-\beta\delta,\alpha\gamma-\beta\mu,
\mu\nu-\delta\tau,\gamma\nu-q\epsilon\tau\rangle$ for some $q\in k$.
Set $\Lambda_q=kQ/I_q.$ Then $\gldim\Lambda_q\le 2$ and $\Lambda_q$
defines an infinite family of finite dimensional algebras, which all have
the same $k$-dimension.

\medskip

We then define the following 
$$d(Q)=\text{sup}\,\{\text{dim}_kkQ/I\,|\,kQ/I\in
\mathcal A(Q)\}$$

\noindent
and 
$$g(Q)=\text{sup}\,\{\gldim\,kQ/I\,|\,kQ/I\in
\mathcal A(Q)\}.$$

\noindent
The next result from \cite{S} yields a relationship between $d(Q)$ and 
$g(Q)$.
Its proof uses an upper semi-continuity argument on the algebraic variety of 
finite 
dimensional algebras of a fixed $k$-dimension.

\begin{theorem} 
There 
is a function $f\colon\mathbb N\to \mathbb N$ such $\gldim
\Lambda\le f(d)$ for all finite dimensional algebras $\Lambda$ with 
$\dm\Lambda\le d$ and $\gldim\Lambda < \infty.$
\end{theorem}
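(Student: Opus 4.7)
The plan is to realize the collection of $d$-dimensional $k$-algebras as points of an affine algebraic variety and exploit its Noetherianity together with the upper semi-continuity of global dimension.

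First, I would consider the affine variety $\mathrm{Alg}_d$ parametrizing associative unital multiplications on the fixed $k$-vector space $k^d$. A point is specified by structure constants $c_{ij}^\ell\in k$ encoding a bilinear map $k^d\times k^d\to k^d$, and the associativity and unit axioms are polynomial conditions, so $\mathrm{Alg}_d$ is a Zariski-closed subset of an affine space. In particular its Zariski topology is Noetherian, so every ascending chain of open subsets stabilizes. Isomorphic algebras correspond to points in a common $GL_d(k)$-orbit, and the property $\gldim\Lambda\le n$ is orbit-invariant, so it descends to a property of points of $\mathrm{Alg}_d$.

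Second, I would argue that for each $n\ge 0$ the subset
$$U_n=\{\Lambda\in\mathrm{Alg}_d \mid \gldim\Lambda\le n\}$$
is Zariski-open in $\mathrm{Alg}_d$. The key reduction is that, for $k$ algebraically closed, $\gldim\Lambda$ coincides with the projective dimension of $\Lambda$ as a module over the enveloping algebra $\Lambda^e=\Lambda\otimes_k\Lambda^{op}$, i.e.\ with its Hochschild dimension. Over the universal algebra $\mathcal A$ on $\mathrm{Alg}_d$ one constructs a bounded-below complex of locally free $\mathcal A^e$-modules which, fiberwise over each $\Lambda$, is a projective resolution of $\Lambda$ as a $\Lambda^e$-module. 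The vanishing of cohomology of this complex in degree $>n$ is an open condition by the standard semi-continuity theorem for cohomology in a flat family, and this vanishing is equivalent to $\gldim\Lambda\le n$.

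Third, the sets $U_0\subseteq U_1\subseteq U_2\subseteq\cdots$ form an ascending chain of open subsets of the Noetherian space $\mathrm{Alg}_d$, so the chain stabilizes: $U_n=U_N$ for some integer $N$ and all $n\ge N$. Now $\bigcup_{n\ge 0}U_n$ is precisely the set of $d$-dimensional algebras of finite global dimension, and this union equals $U_N$, so every such algebra satisfies $\gldim\Lambda\le N$. Setting $f(d)=N$ yields the desired function. The main obstacle is the second step: making the upper semi-continuity of $\gldim$ precise requires passing from the naive statement about individual algebras to a uniform statement about the universal family, with projective resolutions chosen to behave well under base change; once this is in hand the conclusion is immediate from the elementary topological fact that a Noetherian space admits no strictly ascending infinite chain of open subsets.
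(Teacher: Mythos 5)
Your overall architecture is exactly the argument the paper attributes to Schofield \cite{S} (the paper itself only cites the result): parametrize multiplications on $k^d$ by structure constants, show that for each $n$ the locus $U_n$ of points with $\gldim\le n$ is Zariski-open, and let Noetherianity force the chain $U_0\subseteq U_1\subseteq\cdots$ to stabilize at some $U_N$, which then contains the whole finite-global-dimension locus. However, your second step --- the only nontrivial one --- is not correct as stated. A complex of locally free $\mathcal A^e$-modules that is fiberwise a projective resolution of $\Lambda$ over $\Lambda^e$ is by definition exact in positive degrees in every fiber, so ``vanishing of its cohomology in degree $>n$'' holds trivially and is certainly not equivalent to $\gldim\Lambda\le n$. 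To detect $\pd_{\Lambda^e}\Lambda\le n$ you must apply $\Hom_{\Lambda^e}(-,M)$ to the resolution for a suitable test bimodule $M$, and the obvious test module $\Lambda^e/\text{rad}\,\Lambda^e$ (or the simples) does \emph{not} vary algebraically in the family, since the dimension of the radical jumps; moreover the ``semi-continuity theorem for cohomology in a flat family'' you invoke is the coherent-cohomology statement for proper morphisms, which is not what is needed here. So the openness of $U_n$, the heart of the proof, is left unproved.

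The gap is fixable along the lines you sketch. Use the bar resolution $B_\bullet=\Lambda^{\otimes(\bullet+2)}$ of $\Lambda$ over $\Lambda^e$: its terms have dimension $d^{\,\bullet+2}$, independent of the point of the variety, and it is exact at every point, so all syzygies $\Omega^{m}\Lambda=\ker(B_{m-1}\to B_{m-2})$ have constant dimension and (working locally on the reduced variety, where constant-rank kernels are subbundles) form an algebraic family of bimodules. Since $k$ is algebraically closed, $\Lambda/\text{rad}\,\Lambda$ is separable and $\gldim\Lambda=\pd_{\Lambda^e}\Lambda$; and $\pd_{\Lambda^e}\Lambda\le n$ holds iff the sequence $0\to\Omega^{n+1}\Lambda\to B_n\to\Omega^{n}\Lambda\to 0$ splits, iff $\Ext^{n+1}_{\Lambda^e}(\Lambda,\Omega^{n+1}\Lambda)=0$. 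This last group is the cohomology of the complex $\Hom_k(\Lambda^{\otimes\bullet},\Omega^{n+1}\Lambda)$, whose terms are vector bundles over the base and whose differentials depend polynomially on the structure constants; the elementary fact that ranks of bundle maps are lower semicontinuous then shows that $\dm\Ext^{n+1}_{\Lambda^e}(\Lambda,\Omega^{n+1}\Lambda)$ is upper semicontinuous, hence $U_n$ is open. With this substitute for your step two, your steps one and three go through verbatim (add the trivial remark that one should take the maximum of the resulting bounds over all dimensions $d'\le d$, since the theorem is stated for $\dm\Lambda\le d$).
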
 

\begin{corollary}\label{cor}
$g(Q)<\infty\,\,\text{if}\,\, d(Q)<\infty$.

\end{corollary}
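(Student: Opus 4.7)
The plan is to apply the preceding theorem essentially verbatim. Suppose $d(Q) < \infty$, and set $d = d(Q)$. Then by definition of $d(Q)$, every algebra $\Lambda = kQ/I \in \mathcal{A}(Q)$ satisfies $\dim_k \Lambda \le d$, and by definition of $\mathcal{A}(Q)$ it also satisfies $\gldim \Lambda < \infty$. So the hypotheses of the theorem apply uniformly across $\mathcal{A}(Q)$.

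Invoking the theorem, I get a single natural number $f(d)$ such that $\gldim \Lambda \le f(d)$ for every $\Lambda \in \mathcal{A}(Q)$. Taking the supremum over $\mathcal{A}(Q)$ gives $g(Q) \le f(d(Q)) < \infty$, which is exactly the claim.

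There is no real obstacle here; the corollary is a one-line consequence of the theorem, since the function $f$ produced by the semi-continuity argument is uniform in the algebra, not merely in the quiver. The only thing to verify is that the definitions of $d(Q)$, $g(Q)$, and $\mathcal{A}(Q)$ line up correctly with the hypotheses of the theorem, and they do: both require finite $k$-dimension bounded by $d(Q)$ and finite global dimension. Thus no new argument beyond quoting the theorem is needed.
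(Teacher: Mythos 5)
Your argument is correct and is exactly the intended one: the paper states the corollary as an immediate consequence of Schofield's theorem, and your application of the uniform bound $f(d(Q))$ to every algebra in $\mathcal A(Q)$ is precisely that deduction.
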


We will address the following three questions.

\begin{questions}\label{Q} Let $Q$ be a quiver without loops.
\begin{itemize}

\item[{(1)}]  Is $d(Q)<\infty?$

\item[{(2)}]  Is the converse of \ref{cor}  true?

\item[{(3)}]  Is $\gldim\,kQ/I\le\dm kQ/I$ in case 
$\gldim\,kQ/I<\infty?$

\end{itemize}
\end{questions}

In general we do not have an answer to these questions. We will now survey 
what is known 
for special
classes of  algebras. But before, we would like to remind the reader of two classes 
of examples
which show that the values of the global dimension cannot only depend on 
the number of 
simple
modules nor on the Loewy length of the algebra. Recall that the Loewy 
length of $\Lambda$ 
is the least positive integer $d$ such that $\text{rad}^d\Lambda=0,$ where 
we have denoted by $\text{rad}\,\Lambda$
the radical of $\Lambda.$ The following two examples motivated us to consider $\mathcal A(Q)$ for a fixed quiver $Q$.

\medskip

\begin{example} The first example is due to E. Green \cite{Gr}, but see also 
\cite{H2}. It 
deals with the question 
of dependence of the global dimension on the number of simple modules.

\smallskip
\noindent
For each natural number $n$, let $Q_n$ be given by

\begin{center}

\includegraphics[scale=0.5]{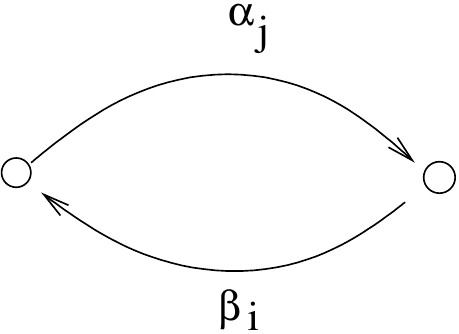}

\end{center}

\noindent
for $1\le i,j\le n$.

\noindent
Let $I_n=\langle\beta_i\alpha_j, 1\le i\le n, 1\le j\le i,\alpha_j\beta_i,
 2 \le j\le n,1 \le i \le j-1\rangle$.

\noindent
Set $\Lambda_n = kQ_n/I_n$. Then $\gldim
\Lambda_n =2n$.
Note that $\text{rad}^{2n}\Lambda_n\neq 0.$
\end{example}

\begin{example} The next example is due to E. Kirkman and J. Kuzmanovich 
and deals with the 
dependence of
the global dimension on the Loewy length. 

\medskip
\noindent Let $n\ge 1$ and let the quiver $Q_n$ be given as in the previous example.

\noindent
Set $I_n=\langle\beta_i\alpha_j\beta_l \,\,\text{for}\,\,
1 \le i,j,l\le n,\alpha_i\beta_{i+l}-\alpha_{i+l}\beta_{i+l}\,\,
\text{for}\,\,l\ge 1, \alpha_i\beta_j\\\noindent\,\,
\text{for}\,\,i>j,\beta_i\alpha_i
\,\,\text{for}\,\,1\le i\le n\rangle$.

\medskip

\noindent
Let $\Lambda_n = kQ_n/I_n$. Then $\gldim
\Lambda_n =2n+1$. Note that $\text{rad}^{4}\Lambda_n = 0.$
\end{example}

We will now mention some results where positive answers
to the questions above have been obtained for special classes of algebras.

\medskip

The first one is due to W. Gustafson \cite{G} and deals with Nakayama algebras.

\begin{theorem}\label{G}
Let $\Lambda$ be a Nakayama algebra of finite global dimension with $n$ non isomorphic simple modules. Then $\gldim\Lambda\le 2n-2$. Moreover, the Loewy length of $\Lambda$ 
is bounded by $2n-1$.
\end{theorem}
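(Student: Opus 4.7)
The plan is to exploit the fact that every indecomposable module over a Nakayama algebra is uniserial, hence determined up to isomorphism by its top composition factor and its Loewy length. In particular, for any simple $S(i)$, every syzygy $\Omega^k S(i)$ is uniserial, so indecomposable, and its minimal projective resolution takes the form
\begin{equation*}
0 \to P(s_d) \to P(s_{d-1}) \to \cdots \to P(s_0) \to S(i) \to 0,
\end{equation*}
with $s_0 = i$, $d = \pd S(i)$, and each term a single indecomposable projective. I would encode $\Omega^k S(i)$ by the pair $(s_k, \ell_k)$, where $\ell_k$ is its length, and derive the recursions $s_{k+1} \equiv s_k + \ell_k$ (read cyclically in the case $Q = \widetilde{\mathbb A}_n$) and $\ell_{k+1} = c_{s_k} - \ell_k$ from $\Omega^k S(i) \cong P(s_k)/\operatorname{rad}^{\ell_k} P(s_k)$ together with the identification of the kernel of the projective cover as $\operatorname{rad}^{\ell_k} P(s_k)$. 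The resolution then terminates precisely when $\ell_d = c_{s_d}$. The linear case $Q = \mathbb A_n$ is already settled by the bound $\gldim \Lambda \le n-1 \le 2n-2$ noted in Section \ref{prelim}, so I would concentrate on the cyclic case.

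Next I would show that the pairs $(s_k, \ell_k)$ for $0 \le k \le d$ are pairwise distinct: if $\Omega^a S(i) \cong \Omega^b S(i)$ for $a < b \le d$, then iterating yields $\Omega^{d+(b-a)} S(i) \cong \Omega^d S(i) = P(s_d) \ne 0$, contradicting $\pd S(i) = d$. Eliminating $\ell$ from the recursion gives the cleaner relation $s_{k+2} \equiv s_k + c_{s_k}$, so the even- and odd-indexed subsequences of $(s_k)$ are forward orbits of the self-map $T(s) = s + c_s$ on $Q_0$.

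The crux is to argue that along a terminating resolution neither of these parity-indexed subsequences can revisit a vertex. Any same-parity revisitation $s_{2a} = s_{2b}$ (or odd analogue) would force the $\ell$-sequence to satisfy a closed oscillation along the induced $T$-cycle whose only consistent solution is incompatible with both the initial condition $\ell_0 = 1$ and the termination condition $\ell_d = c_{s_d}$. Granted this injectivity, each parity class contributes at most $n$ distinct entries, so $d + 1 \le 2n$, i.e., $d \le 2n - 1$; the sharper inequality $d \le 2n - 2$ then follows by noting that the extremal slots $(s_0, 1)$ and $(s_d, c_{s_d})$ occupy positions whose simultaneous availability on already-saturated parity classes violates the same compatibility relation. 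Finally, for the Loewy-length bound $L = \max_i c_i \le 2n - 1$, I would apply the same analysis to $S(i)$ itself: $\operatorname{rad} P(i) = \Omega S(i)$ has length $c_i - 1$, and injectivity on parity classes across the resolution of $S(i)$ forces $c_i - 1 \le 2n - 2$.

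The main obstacle is clearly the parity-based injectivity step. Concretely, converting the intuition that an oscillating $T$-orbit cannot terminate into a rigorous statement requires analyzing the alternating telescoping sum $\sum_{k=0}^{d}(-1)^k c_{s_k}$ along a hypothetical closed $T$-cycle and deriving a numerical contradiction with the boundary values of $\ell$. This combinatorial compatibility is exactly what separates the bound $2n - 2$ from the weaker $2n - 1$ that naive pigeonhole alone would give.
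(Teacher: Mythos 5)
Your reduction to the pair recursion $(s_{k+1},\ell_{k+1})=(s_k+\ell_k,\,c_{s_k}-\ell_k)$ and the observation $s_{k+2}=s_k+c_{s_k}$ are fine, but the lemma on which the whole count rests --- that along a terminating resolution neither parity-indexed subsequence of tops can revisit a vertex --- is false, and it is falsified by the very example the paper gives to show the bound $2n-2$ is attained. Already for $n=3$: take the cyclic quiver with arrows $i\to i+1$ (mod $3$) and $I=\langle \alpha_1\alpha_2\alpha_3,\ \alpha_2\alpha_3\alpha_1\alpha_2\rangle$. With the indexing matched to your recursion, the Kupisch data are $c_1=3$, $c_2=c_3=4$, and the syzygy walk for $S(1)$ is $(1,1),(2,2),(1,2),(3,1),(1,3)$, terminating at step $4$ because $\ell_4=c_1$; so $\pd S(1)=4=2n-2$, exactly as the paper states. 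Here the even-indexed tops are $1,1,1$: vertex $1$ is a fixed point of your map $T(s)=s+c_s$ (since $c_1\equiv 0 \bmod 3$), the even subsequence sits inside a $T$-cycle for the entire resolution, and yet the resolution terminates because the lengths $\ell_0=1,\ \ell_2=2,\ \ell_4=3$ drift up to $c_1$. So being trapped in a $T$-cycle is perfectly compatible with termination; the ``closed oscillation'' obstruction and the alternating telescoping sum you appeal to do not exist, and with them collapse the pigeonhole bound $d+1\le 2n$, the promised sharpening to $2n-2$, and the Loewy-length argument (all three of which were anyway only gestured at). In the general extremal example the even class is a single vertex repeated $n$ times, so any correct combinatorial proof must instead bound how long the length sequence can drift monotonically inside a $T$-cycle --- precisely the part your sketch leaves blank, and the genuinely hard part of the theorem.

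For comparison, the paper's proof avoids this combinatorics altogether: it inducts on $n$ for cyclic quivers, using that finite global dimension forces some simple, say $S(n)$, to have $\pd S(n)=1$; it passes to $\Gamma=\End_\Lambda\bigl(\bigoplus_{i=1}^{n-1}P(i)\bigr)$, again a Nakayama algebra of finite global dimension with $n-1$ simples, and compares resolutions: applying $\Hom_\Lambda(P,-)$ to the minimal (termwise indecomposable) resolution of a simple $S\not\cong S(n)$ gives a projective $\Gamma$-resolution that can fail to be minimal only at its last step, whence $\gldim\Lambda\le\gldim\Gamma+2\le 2n-2$; the Loewy-length bound $2n-1$ is quoted from Gustafson. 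If you wish to salvage a purely combinatorial argument via your recursion, you need a different mechanism than parity injectivity, e.g.\ a quantitative control of the monotone growth of $\ell$ along $T$-cycles.
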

\begin{proof}  We give a proof of the bound of the global dimension
different from the one given in \cite{G} and refer for the bound of the 
Loewy length to \cite{G}. 
We will proceed
by induction on $n$. Trivially we may restrict to the case of a cyclic quiver. 
For $n=2$ it is easy to see that 
the only Nakayama algebra with two simple modules is given by the 
Auslander algebra of $k[t]/{\langle t^2\rangle},$
hence has global dimension two. So the result holds for $n=2$. 
Since $\Lambda$ has finite 
global dimension, there is a simple $\Lambda$-module, say $S(n)$ of projective dimension 1 (see \cite{Z2}).  Let $P=\bigoplus_{i=1}^{n-1}P(i)$ and let 
$\Gamma=\text{End}_\Lambda P$. Then
clearly $\Gamma$ is again a Nakayama algebra and by \cite{Z1},
$\gldim\Gamma\le\gldim\Lambda<\infty$. So by induction 
we have that 
$\gldim\Gamma\le2(n-1)-2$, since $\Gamma$ has $n-1$ simple modules.  The simple $\Gamma$-modules are all of the form $\Hom_{\Lambda}(P,S)$ where $S$ is a simple $\Lambda$-module such that $S$ is not isomorphic to $S(n)$. If $S\not\cong S(n)$ is such a simple $\Lambda$-module and
$$0\to P^t\to P^{t-1}\to\cdots\to P^1\to P(S)\to S\to 0$$
\noindent
is a minimal projective resolution of $S$, then all $P^i$ are indecomposable. Moreover
 $$(*)\,\,0\to \text{Hom}(P,P^t)\to\cdots\to\text{Hom}(P,P(S))
\to \text{Hom}(P,S)\to 0$$

\noindent
is a projective resolution of the simple $\Gamma$-module $\text{Hom}(P,S)$. The resolution $(*)$ is usually not minimal. This happens
if $\text{rad}\,P(n)\to P(n)$ occurs in $(*)$ and the map is the standard 
embedding. But this map can
only occur in $(*)$ at the end of the resolution, so we can conclude that 
$\gldim\Gamma\ge\gldim\Lambda -2$. So by induction 
$\gldim\Lambda\le 2n-2$.
\end{proof}

The result \ref{G} may be reformulated as follows. Let $Q$  be an oriented cycle with $n\ge 2$ vertices. Then $g(Q)\le 2n-2$. 

\medskip
We include now an example from \cite{G} showing that the above bound is optimal, that is
$g(Q)= 2n-2$ for $Q$ an oriented cycle with $n\ge 2$ vertices.
Let $Q$ be an oriented cycle with $n$ vertices and $n$ arrows $\alpha_i$ for
$1\le i\le n$ such that $s(\alpha_i)=i=e(\alpha_{i-1})$ for $1< i < n$ and
$e(\alpha_n)=1=s(\alpha_1)$. \begin{center}

\includegraphics[scale=0.8]{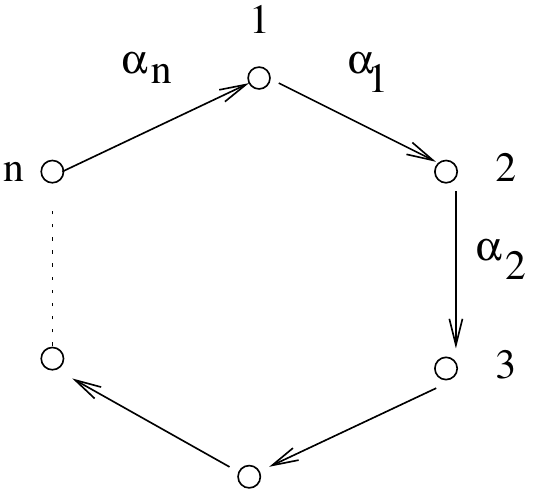}

\end{center}

\medskip

\noindent
Let $w_1=\alpha_1\dots\alpha_n$ and for 
$2\le i\le n-1$ let 
$w_i=\alpha_i\alpha_{i+1}\dots\alpha_n\alpha_1\dots\alpha_i.$ Then let
$\Lambda=kQ/{\langle w_1,\ldots,w_{ n-1}\rangle}$. Then $\gldim\Lambda=2n-2$. 
In fact,  
it is readily checked
that $\text{proj.dim}_\Lambda S(n-i)=2i+1$ for $0\le i <n-1$ and 
$\text{proj.dim}_\Lambda S(1)=2n-2$. 

\medskip

We point out that the second bound in \ref{G}
is also optimal. For more details we refer to 
\cite{G}.

\medskip
Contrary to the general situation, the global dimension for quasi-hereditary 
algebras is bounded
by a function on the number of simple modules. We have the following 
result from \cite{DR3}.

\begin{theorem}\label{DR}
If $\Lambda$ is a quasi-hereditary algebra with $n$ non isomorphic simple modules, 
then $\gldim\Lambda\le 2n-2.$
\end{theorem}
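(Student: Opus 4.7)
My plan is to argue by induction on $n$, the number of non-isomorphic simple $\Lambda$-modules. The base case $n=1$ is immediate: a quasi-hereditary algebra with only one simple module is forced to be semisimple, so $\gldim\Lambda=0\le 0$. For the inductive step, I would use the heredity-ideal structure: after refining the defining chain if needed, $\Lambda$ admits a heredity ideal of the form $J=\Lambda e\Lambda$ with $e$ a primitive idempotent. Two standard consequences will drive the argument, namely (a) $J$ is projective as a left $\Lambda$-module, so $\pd_\Lambda(\Lambda/J)\le 1$, and (b) $e\operatorname{rad}(\Lambda)e=0$. Moreover, $\Lambda/J$ inherits a quasi-hereditary structure with $n-1$ simple modules, so by the inductive hypothesis $\gldim(\Lambda/J)\le 2(n-1)-2=2n-4$.

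The key technical step is a transfer lemma: for every $\Lambda/J$-module $M$,
$$\pd_\Lambda M\le\pd_{\Lambda/J}M+1.$$
I would prove this by a second induction on $d=\pd_{\Lambda/J}M$. Given a short exact sequence $0\to K\to P\to M\to 0$ with $P$ projective over $\Lambda/J$, property (a) forces $\pd_\Lambda P\le 1$ (since $P$ is a summand of $(\Lambda/J)^r$), while $\pd_{\Lambda/J}K=d-1$ permits the inductive hypothesis to give $\pd_\Lambda K\le d$. Standard dimension-shifting along the short exact sequence then yields $\pd_\Lambda M\le d+1$.

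I would then bound $\pd_\Lambda S$ for each simple $\Lambda$-module $S$. If $eS=0$, then $S$ is a $\Lambda/J$-simple, and combining the transfer lemma with $\gldim(\Lambda/J)\le 2n-4$ gives $\pd_\Lambda S\le(2n-4)+1=2n-3$. The only remaining simple is $S(e)=\Lambda e/\operatorname{rad}(\Lambda e)$; using (b) one checks $J\cdot\operatorname{rad}(\Lambda e)\subseteq\Lambda\cdot e\operatorname{rad}(\Lambda)e=0$, so $\operatorname{rad}(\Lambda e)$ is in fact a $\Lambda/J$-module. The transfer lemma applied to it gives $\pd_\Lambda\operatorname{rad}(\Lambda e)\le 2n-3$, and the short exact sequence $0\to\operatorname{rad}(\Lambda e)\to\Lambda e\to S(e)\to 0$ then yields $\pd_\Lambda S(e)\le 2n-2$. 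Taking the maximum over all simples gives $\gldim\Lambda\le 2n-2$, completing the induction.

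The main obstacle, as I see it, is not a computation but a structural verification: one has to confirm that every quasi-hereditary algebra with $n\ge 2$ simples admits a heredity ideal of the form $\Lambda e\Lambda$ with $e$ primitive, and that $\Lambda/J$ is again quasi-hereditary with exactly $n-1$ simples. This is a standard refinement of the defining heredity chain, but it is where most of the nuance lives; the transfer lemma and the two-case analysis on simples are routine by comparison.
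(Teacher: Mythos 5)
Your argument is correct: the transfer inequality $\pd_\Lambda M\le \pd_{\Lambda/J}M+1$ for $\Lambda/J$-modules, the two-case analysis of the simples (those killed by $e$ versus $S(e)$, using $e\operatorname{rad}(\Lambda)e=0$ to see that $\operatorname{rad}(\Lambda e)$ is a $\Lambda/J$-module), and the induction giving $\gldim\Lambda\le\gldim(\Lambda/J)+2\le 2n-2$ all check out, and the structural fact you flag --- that a quasi-hereditary algebra with $n\ge 2$ simples has a heredity ideal $\Lambda e\Lambda$ with $e$ primitive whose quotient is again quasi-hereditary with $n-1$ simples, i.e.\ that heredity chains refine to maximal ones of length $n$ --- is indeed standard and is established in \cite{DR3}. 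Note that the paper gives no proof of Theorem \ref{DR} at all, merely citing \cite{DR3}; what you have written is essentially the original Dlab--Ringel induction along a refined heredity chain, so there is nothing to contrast with the text beyond that.
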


Results in \cite{DR1} and \cite{DR2} may also be reformulated. Let 
$$\mathcal A'(Q)=\{kQ/I\,|\,\dm kQ/I<\infty\,\,\text{and}
\,\,kQ/I \,\,\text{is quasi-hereditary}\}.$$ 
\noindent and 
$$d'(Q)=\sup\,\{\dm kQ/I\,|\,kQ/I\in
\mathcal A'(Q)\}.$$

Note that a quasi-hereditary algebra is always of finite global dimension (see for 
example \cite{DR3}), so $\mathcal A'(Q)\subset\mathcal A(Q)$.

\begin{theorem}\label{dr1}
$d'(Q)<\infty.$
\end{theorem}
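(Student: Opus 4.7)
\bigskip

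\noindent\textbf{Proof plan.} We argue by induction on $n = |Q_0|$. The base case $n=1$ is immediate: a loopless one-vertex quiver has no arrows, so $\mathcal{A}'(Q) = \{k\}$ and $d'(Q) = 1$. For the inductive step, observe that $Q_0$ carries only finitely many partial orders; it therefore suffices, for each fixed partial order $\le$ on $Q_0$, to bound $\dim_k\Lambda$ uniformly over quasi-hereditary algebras $\Lambda = kQ/I$ with respect to $\le$.

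Fix such an order and choose a vertex $i_0 \in Q_0$ that is minimal for $\le$. Then $\Delta(i_0) = P(i_0) = \Lambda e_{i_0}$, and $\End_\Lambda\Delta(i_0) = k$ yields $e_{i_0}\Lambda e_{i_0} = k$; hence $J := \Lambda e_{i_0}\Lambda$ is a heredity ideal (the first step of a heredity chain coming from any linear refinement of $\le$ starting at $i_0$). Thus $J \cong \Lambda e_{i_0}\otimes_k e_{i_0}\Lambda$ as $\Lambda$-bimodules, and $\Lambda/J$ is a quasi-hereditary algebra whose quiver is a subquiver of $Q\setminus\{i_0\}$. Consequently
\[
\dim_k\Lambda \;=\; \dim_k(\Lambda/J) \;+\; \dim_k(\Lambda e_{i_0})\cdot\dim_k(e_{i_0}\Lambda).
\]
Since there are only finitely many subquivers of $Q\setminus\{i_0\}$, the inductive hypothesis delivers a uniform bound on $\dim_k(\Lambda/J)$.

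The crux is to bound $\dim_k P(i_0) = \dim_k(\Lambda e_{i_0})$ and, symmetrically, $\dim_k(e_{i_0}\Lambda)$. For the former, the key observation is that $e_{i_0}\Lambda e_{i_0} = k$ forces $[\Delta(i_0):S(i_0)] = 1$, so $\mathrm{rad}\,P(i_0)$ has no composition factor $S(i_0)$; equivalently, $e_{i_0}\cdot\mathrm{rad}\,P(i_0) = 0$, which makes $\mathrm{rad}\,P(i_0)$ naturally a module over the smaller algebra $\Lambda/J$. As a $\Lambda$-module it is generated by the classes of the arrows $\alpha$ of $Q$ with $s(\alpha) = i_0$; since $Q$ has no loop at $i_0$, each such $\alpha$ ends at some $k \in Q_0\setminus\{i_0\}$, and the cyclic $\Lambda/J$-submodule generated by the class of $\alpha$ is a quotient of the indecomposable projective $\Lambda/J$-module at $k$, hence of dimension at most $\dim_k(\Lambda/J)$. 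Therefore
\[
\dim_k P(i_0) \;=\; 1 + \dim_k\mathrm{rad}\,P(i_0) \;\le\; 1 + |\{\alpha : s(\alpha)=i_0\}|\cdot\dim_k(\Lambda/J),
\]
which is uniformly bounded by the inductive step. The analogous right-module argument, using arrows \emph{ending} at $i_0$, bounds $\dim_k(e_{i_0}\Lambda)$, and combining these controls $\dim_k\Lambda$ uniformly.

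The main technical obstacle is the observation that $\mathrm{rad}\,P(i_0)$ descends to a $\Lambda/J$-module and that its $\Lambda/J$-generators are in bijection with the arrows of $Q$ emanating from $i_0$: this is precisely where the loop-free hypothesis on $Q$ and the quasi-hereditary identity $[\Delta(i_0):S(i_0)] = 1$ combine to close the induction.
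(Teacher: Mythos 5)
The paper gives no proof of Theorem \ref{dr1}: it is stated as a reformulation of results of Dlab and Ringel and simply cited from \cite{DR1}, \cite{DR2}. Your argument is, in effect, a self-contained reconstruction of the Dlab--Ringel bound for the dimension of a quasi-hereditary algebra in terms of its quiver (equivalently, of $\Lambda/\operatorname{rad}^2\Lambda$), and as far as I can see it is correct: peeling off $J=\Lambda e_{i_0}\Lambda$ at a vertex extremal for the order, using $e_{i_0}\Lambda e_{i_0}=k$ to get $\dim_k J\le \dim_k(\Lambda e_{i_0})\cdot\dim_k(e_{i_0}\Lambda)$, and bounding each wing by $1$ plus (number of arrows at $i_0$) times $\dim_k(\Lambda/J)$ closes the induction. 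A few points should be made explicit. (i) With the paper's convention that $\Delta(i)$ is the largest quotient of $P(i)$ with composition factors $S(j)$, $j\ge i$, a \emph{minimal} vertex is indeed the right choice (with the more common convention it would be a maximal one); also, since any quasi-hereditary structure refines to a total order, you can skip the ``finitely many partial orders'' reduction and just take the maximum of your bound over all $i_0\in Q_0$. (ii) The assertion that the quiver of $\Lambda/J$ is a subquiver of $Q\setminus\{i_0\}$ needs a sentence: for $j,l\ne i_0$ the inflation map $\Ext^1_{\Lambda/J}(S(j),S(l))\to\Ext^1_\Lambda(S(j),S(l))$ is injective, because a $\Lambda$-splitting of an extension of $\Lambda/J$-modules is automatically $\Lambda/J$-linear; hence no new arrows (in particular no loops) appear and the inductive hypothesis applies to the finitely many possible quivers. (iii) For the dimension count you only need the surjection $\Lambda e_{i_0}\otimes_k e_{i_0}\Lambda\to J$, which is automatic; the bimodule isomorphism, i.e.\ that $J$ is a heredity ideal, is true but not needed, so the only quasi-hereditary input is $P(i_0)=\Delta(i_0)$, $[\Delta(i_0):S(i_0)]=1$ (giving $e_{i_0}\Lambda e_{i_0}=k$), and the standard fact that $\Lambda/J$ is quasi-hereditary for the restricted order \cite{DR3}, \cite{CPS}. (iv) You tacitly assume $I$ is admissible, so that $\Lambda$ is basic with quiver $Q$ and $\operatorname{rad}P(i_0)$ is generated by the residue classes of the arrows at $i_0$; this is the reading the paper intends for $\mathcal A'(Q)$. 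With these small additions your proof stands, and it has the merit of making the source of the bound (arrows at the removed vertex times the dimension of the smaller algebra) completely explicit, whereas the paper buys the statement by citation.
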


\begin{corollary} Let $Q$ be a quiver without loops. Then
$$\sup\,\{\dm kQ/I\,|\,\gldim\,kQ/I\le 2\}
\in\mathbb N.$$
\end{corollary}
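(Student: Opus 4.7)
The plan is to deduce the corollary directly from Theorem \ref{dr1} by establishing the containment
\[
\{kQ/I \mid \gldim\,kQ/I \le 2\} \subseteq \mathcal A'(Q),
\]
that is, by showing that every finite-dimensional algebra $\Lambda = kQ/I$ with $\gldim \Lambda \le 2$ and $Q$ loopless is automatically quasi-hereditary. Once this containment is in hand, Theorem \ref{dr1} gives $\dim_k \Lambda \le d'(Q) \in \mathbb N$ for every such $\Lambda$, so the supremum is bounded by $d'(Q)$ and the conclusion follows immediately.

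To prove quasi-heredity, I would induct on the number $n = |Q_0|$ of vertices of $Q$. The base case $n=1$ is trivial: the no-loop hypothesis forces $\Lambda \simeq k$. For the inductive step, since $\gldim \Lambda < \infty$, a classical result (see \cite{Z2}) guarantees the existence of a simple $\Lambda$-module $S(v)$ with $\text{proj.dim}_\Lambda S(v) \le 1$. Placing $v$ as the maximal element in a total order on $Q_0$, one has $\Delta(v) = S(v)$, and the short resolution $0 \to P^1 \to P(v) \to S(v) \to 0$ with $P^1$ projective combines with the absence of loops at $v$ to exhibit $J = \Lambda e_v \Lambda$ as a heredity ideal of $\Lambda$. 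The quotient $\Lambda' = \Lambda/J$ then has quiver $Q \setminus \{v\}$, which is still loopless, and by the standard behavior of global dimension under quotients by heredity ideals one has $\gldim \Lambda' \le \gldim \Lambda \le 2$. The inductive hypothesis gives a heredity chain on $\Lambda'$, which lifts together with $J$ to a heredity chain on $\Lambda$.

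The main technical obstacle is verifying that $J = \Lambda e_v \Lambda$ is genuinely a heredity ideal: this requires not only $\text{proj.dim}\,S(v) \le 1$ but also semisimplicity of $e_v \Lambda e_v$. The no-loop hypothesis rules out arrows $v \to v$ but does not by itself exclude longer oriented cycles through $v$, which could contribute nonzero elements to $e_v \Lambda e_v$. Handling this requires a careful choice of $v$: among all simples of projective dimension at most $1$, one must select one whose vertex lies on no nontrivial oriented cycle surviving in $\Lambda$. Such a $v$ should exist by combining the finiteness of $\gldim \Lambda$ with a height-type argument on the quiver of $\Lambda$, but pinning this down is where the real work lies. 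Should the full quasi-heredity claim fail for some anomalous $\Lambda$, a fallback is to embed $\Lambda$ into a quasi-hereditary algebra on the same quiver whose dimension is controlled by $d'(Q)$, for instance by adjoining the Dlab--Ringel relations from Theorem \ref{mono} in a controlled way.
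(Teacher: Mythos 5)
Your reduction is exactly the one the paper uses: show that every finite dimensional $kQ/I$ with $\gldim kQ/I\le 2$ lies in $\mathcal A'(Q)$ because it is quasi-hereditary, then invoke Theorem~\ref{dr1} to bound $\dm kQ/I$ by $d'(Q)$. The difference is that the paper closes the first step in one sentence by citing the known theorem of Dlab and Ringel \cite{DR3} that any algebra of global dimension at most two is quasi-hereditary, whereas you attempt to reprove this fact by induction and leave its only nontrivial point open. Concretely, your argument needs a vertex $v$ such that $\pd_\Lambda S(v)\le 1$, $e_v\Lambda e_v\simeq k$, and $\Lambda e_v\Lambda$ is projective as a left module, i.e.\ a genuine heredity ideal; you acknowledge that you do not know how to produce such a $v$, and this is precisely the content of the Dlab--Ringel theorem, not a routine verification. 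Two of your supporting claims are also shaky as stated: the appeal to \cite{Z2} for a simple of projective dimension at most one concerns Nakayama (special monomial) algebras, and the general statement ``finite global dimension implies some simple has projective dimension $\le 1$'' is false --- the paper's own truncated $3$-cycle example has $\gldim\Lambda=4$ with the simple modules of projective dimensions $2$, $3$ and $4$ --- so even the starting point of your induction would need a separate argument in the $\gldim\le 2$ case. (By contrast, your step ``$\gldim\Lambda/J\le\gldim\Lambda$ for $J$ a heredity ideal'' is fine, since heredity ideals are stratifying and $\Ext$-groups over $\Lambda/J$ agree with those over $\Lambda$.)

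The fallback you sketch does not repair the gap: adjoining the relations of Theorem~\ref{mono} produces a \emph{quotient} of $\Lambda$, not an overalgebra, so it bounds the dimension of the quotient rather than $\dm\Lambda$; and no embedding of $\Lambda$ into an algebra of $\mathcal A'(Q)$ is constructed. As written, then, the proposal has a genuine gap as a self-contained proof. It becomes complete --- and coincides with the paper's argument --- the moment you replace your inductive sketch by the citation of \cite{DR3} for ``global dimension at most two implies quasi-hereditary'' and then apply Theorem~\ref{dr1}.
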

\begin{proof} It is well-known that any algebra of global dimension at most two is
quasi-hereditary \cite{DR3}. So the result follows from \ref{dr1}.
\end{proof}

For monomial algebras we obtain an affirmative answer for our question 3 
from \cite{IZ1}. Actually a stronger statement is shown in \cite{IZ1}. We
refer to this article for further details.

\begin{theorem}\label{IZ}

Let $\Lambda$ be a finite dimensional monomial algebra of finite global dimension. Then 
$\gldim\Lambda\le\dm\Lambda.$ 
\end{theorem}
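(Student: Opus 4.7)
The plan is to exploit the explicit combinatorial description of the minimal projective resolution of each simple module over a monomial algebra, as developed in \cite{GHZ} and \cite{HZ}. For $\Lambda=kQ/\langle w_1,\dots,w_r\rangle$ monomial and each vertex $i\in Q_0$, one obtains a minimal projective resolution
\[\cdots\to P^n(i)\to\cdots\to P^1(i)\to P^0(i)\to S(i)\to 0,\]
in which $P^n(i)=\bigoplus_{p\in R^n(i)}P(e(p))$ for a combinatorially specified finite set $R^n(i)$ of paths in $Q$. One has $R^0(i)=\{e_i\}$, $R^1(i)=\{\alpha\mid\alpha\text{ an arrow with }s(\alpha)=i\}$, and for $n\ge 2$ each element of $R^{n+1}(i)$ is built from one of $R^n(i)$ by appending a nontrivial suffix of some relation $w_j$ (the ``$n$-chain'' construction of GHZ).

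First I would check that each $p\in R^n(i)$ determines a nonzero residue $\bar p\in\mathcal J$: by the GHZ construction, $p$ contains no $w_j$ as a subpath (otherwise the corresponding generator of the syzygy would already vanish in $\Lambda$), so $p\notin I$. Second, I would verify that for fixed $i$ the sets $R^n(i)$ are pairwise disjoint as $n$ varies: the lengths of the paths in $R^n(i)$ strictly increase in $n$, because every relation $w_j$ has length at least two, so appending a nontrivial suffix of a $w_j$ in passing from step $n$ to step $n+1$ increases the length by at least one. Consequently the map $\bigsqcup_{n\ge 0}R^n(i)\hookrightarrow\mathcal J$ given by $p\mapsto\bar p$ is well-defined and injective.

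To finish, choose a simple $S(i)$ realizing the global dimension: $\text{proj.dim}_\Lambda S(i)=\gldim\Lambda=:d$. Then $P^n(i)\ne 0$ for $0\le n\le d$, hence each $R^n(i)$ is nonempty for such $n$, and we produce at least $d+1$ pairwise distinct elements of $\mathcal J$. Proposition~\ref{mono1} then gives $d+1\le|\mathcal J|=\dm\Lambda$, which yields $\gldim\Lambda<\dm\Lambda$ and in particular the asserted inequality.

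The main obstacle is the second step: establishing that the indexing paths across \emph{all} steps of the resolution of a single simple $S(i)$ are pairwise distinct, not merely distinct within a single step. This requires an inductive bookkeeping on the $n$-chain construction, using that each relation has length $\ge 2$ to force strict growth of path length from one syzygy to the next; any collapse in this length control would threaten the bound, so the admissibility hypothesis on $I$ is crucial here.
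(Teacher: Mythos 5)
You should first note that the paper itself does not prove Theorem \ref{IZ}; it only cites \cite{IZ1}, so your argument has to stand on its own, and as written it has a genuine gap: you are conflating two different combinatorial descriptions of the minimal resolution of a simple module over a monomial algebra. In the chain (overlap) description that you invoke from \cite{GHZ} --- the one in which $P^n(i)=\bigoplus_{p\in R^n(i)}P(e(p))$ and each element of $R^{n+1}(i)$ extends an element of $R^n(i)$ by a piece of a relation --- the $2$-chains are exactly the minimal relations $w_j$, and every $n$-chain with $n\ge 2$ contains some $w_j$ as a subpath; since $I$ is monomial, such a path lies in $I$, so $\bar p=0$ and $p\notin\mathcal J$. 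What enters the resolution is only the terminal vertex $e(p)$ of the chain, not its residue class, so your first step fails already at $n=2$ and the map $\bigsqcup_{n}R^n(i)\to\mathcal J$ is not defined. The objects that really are nonzero in $\Lambda$ are the generators of the syzygy summands, i.e.\ the ``tails'': $\Omega(\Lambda\bar p)\cong\bigoplus_q\Lambda\bar q$ with $q$ running over the minimal paths completing $p$ into $I$. But if you reinterpret $R^n(i)$ as these tails, then your second step fails instead: tails do not grow in length from one syzygy to the next, and nothing guarantees they are pairwise distinct across homological degrees. The paper's own cyclic example with $I=\langle\gamma\alpha\beta,\alpha\beta\gamma\alpha\rangle$ makes the hybrid claim (nonzero \emph{and} strictly growing) impossible: there $\pd_\Lambda S(3)=4$, so a strictly growing system of nonzero indexing paths would force a nonzero path of length at least $4$ in the fourth step, yet in that algebra every path of length $\ge 4$ contains a relation and is zero.

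So the count $d+1\le|\mathcal J|$ from Proposition \ref{mono1} is not established, and this is not a bookkeeping matter you can patch by ``inductive bookkeeping on the $n$-chain construction'': producing an injective assignment from the steps of a resolution realizing the global dimension to nonzero paths is precisely the content of the Igusa--Zacharia argument in \cite{IZ1} (their ``syzygy pairs'', which track consecutive tails and use finiteness of the global dimension in an essential way), and it is finer than any length-monotonicity count. If you want to salvage your outline, you must replace your two claims by a genuine non-repetition statement for the tail data along a single thread of the resolution; neither the accumulated chains (zero in $\Lambda$ for $n\ge2$) nor the bare tails (not length-monotone, possibly repeating) give the injection into $\mathcal J$ that your final count requires.
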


One may restrict question one  to subclasses of algebras,
for example monomial algebras. But even then this  is an open problem. 
The results and examples below show that question one is related to
Section \ref{obstr} in the sense that there are obstructions
on the admissible ideal $I$ of an algebra of finite global dimension
of the form $kQ/I$ for a quiver $Q$ without loops.

\medskip

In the case of a monomial algebra $\Lambda=kQ/I$ we can say the following.

\medskip

Let $w\in Q$ be an oriented cycle, say $w=\alpha_1\dots\alpha_t.$ Then for
$1\le i\le t$ we set the  ``rotated" cycle $w_i=\alpha_i\dots\alpha_t\alpha_1\dots\alpha_{i-1}$. 

\begin{proposition}\label{prop1} Let $\Lambda=kQ/I$ be a monomial algebra 
of finite global dimension.
Then for each oriented cycle $w$ of $Q$ of length $t$ there is 
$1\le i\le t$ such that $w_i\in I$.
\end{proposition}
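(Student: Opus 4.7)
The plan is to argue by contradiction, exploiting the observation recorded just before the no-loop theorem (itself a consequence of \ref{Hochschild}): in an algebra of finite global dimension, every proper cycle has its residue class in $[\Lambda,\Lambda]$. So assume toward a contradiction that no $w_i$ lies in $I$; by \ref{mono1} the classes $\bar w_1,\ldots,\bar w_t$ are then (not necessarily distinct) basis vectors of $\Lambda$. I will construct a linear form on $\Lambda$ which vanishes on $[\Lambda,\Lambda]$ yet sends $\bar w_1$ to $1$, contradicting $\bar w_1\in[\Lambda,\Lambda]$.

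The linear form is built in two steps. Let $V\subseteq\Lambda$ be the subspace spanned by the \emph{distinct} paths appearing among $w_1,\ldots,w_t$, and let $\pi\colon\Lambda\to V$ be the projection onto $V$ along the remaining monomial basis vectors of \ref{mono1}. Compose it with the ``augmentation'' $\epsilon\colon V\to k$ that sends each basis vector of $V$ to $1$. Plainly $\epsilon(\pi(\bar w_1))=1$, so the whole argument reduces to showing that $\epsilon\circ\pi$ annihilates every commutator. For basis monomials $\bar u,\bar v$ one has $[\bar u,\bar v]=\overline{uv}-\overline{vu}$, with the convention that a non-composable product, or one lying in $I$, is $0$. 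The key observation is a symmetry: if $\pi(\overline{uv})\ne 0$ then $uv$ is a rotation of $w$, hence a cycle, so $s(u)=e(v)$ and $s(v)=e(u)$; this forces $vu$ to be composable and also a rotation of $w$, and by the standing assumption $vu\notin I$, so $\pi(\overline{vu})\ne 0$ too. Hence $\pi([\bar u,\bar v])$ is either $0$ or a difference of two basis vectors of $V$, and $\epsilon$ kills it.

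The main subtlety I anticipate is handling cycles $w$ of non-trivial period, where different rotation indices may yield the same path. Working throughout in the basis of $V$ given by the \emph{distinct} rotations sidesteps this: if $uv$ and $vu$ coincide as paths then $[\bar u,\bar v]=0$ already, and otherwise $\pi([\bar u,\bar v])$ is a genuine difference of two distinct basis vectors of $V$, so once again $\epsilon$ gives $1-1=0$.
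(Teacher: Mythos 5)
Your argument is correct and takes essentially the same route as the paper: both reduce, via \ref{Hochschild} and the remark that in an algebra of finite global dimension every proper cycle lies in $[\Lambda,\Lambda]$, to writing $\bar w$ as a sum of commutators of basis paths from \ref{mono1}, and then exploit the fact that for paths $u,v$ the products $uv$ and $vu$ are simultaneously rotations of $w$. Your functional $\epsilon\circ\pi$ simply makes explicit (and rigorous, including the periodic-cycle case) the coefficient bookkeeping that the paper compresses into the assertion that $w=u_iv_i$ for some $i$ with $v_iu_i\in I$.
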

\begin{proof} Let $w$ be an oriented cycle of $Q$ of length $t$.  Without loss of generality we may assume that $w$ is a nonzero path in $\Lambda$, that is 
$w\notin I$. Then by \ref{Hochschild}
we infer that $w\in[\Lambda,\Lambda]$ since the image of $w$ is  $0$ in $H_0(\Lambda)$. 
Thus there are elements
$u_i,v_i\in\Lambda$ for $1\le i\le m$ such that 
$w=\sum_{i=1}^m u_iv_i-v_iu_i$. Since $\Lambda$ is
monomial the set of nonzero paths (including the paths of length  $0$) 
in $\Lambda$ forms a $k$-basis of $\Lambda$ (compare \ref{mono1}), so we may also assume that the $u_i$ and $v_i$ are nonzero paths in $\Lambda$. It follows that $w=u_iv_i$ 
for some $i$ and that $v_iu_i\in I$. But $v_iu_i$  is  clearly
a rotation of $w$. 
\end{proof}

\begin{corollary}\label{cor1} Let $\Lambda=kQ/I$ be a monomial algebra 
of finite global dimension.
Then for each oriented cycle $w$ of $Q$ we have that $w^2\in I$.
\end{corollary}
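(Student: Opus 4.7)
The plan is to apply Proposition \ref{prop1} directly and then observe that $w^{2}$ contains every rotation of $w$ as a contiguous subword.

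Let $w=\alpha_{1}\cdots\alpha_{t}$ be an oriented cycle of length $t$ in $Q$. If $w\in I$, then trivially $w^{2}\in I$ since $I$ is an ideal, so we may assume $w\notin I$ (otherwise the statement is immediate). By Proposition \ref{prop1} applied to $w$, there exists some index $i$ with $1\le i\le t$ such that the rotated cycle
\[
w_{i}=\alpha_{i}\alpha_{i+1}\cdots\alpha_{t}\alpha_{1}\cdots\alpha_{i-1}
\]
lies in $I$.

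Next I would write $w^{2}=\alpha_{1}\cdots\alpha_{t}\alpha_{1}\cdots\alpha_{t}$ as a path of length $2t$ and locate $w_{i}$ inside it. Reading $t$ consecutive arrows of $w^{2}$ starting from position $i$ produces exactly $w_{i}$, so we obtain the factorization
\[
w^{2}=(\alpha_{1}\cdots\alpha_{i-1})\,w_{i}\,(\alpha_{i}\cdots\alpha_{t})
\]
in the path algebra $kQ$ (with the convention that an empty product of arrows is the appropriate primitive idempotent). Since $I$ is a two-sided ideal of $kQ$ containing $w_{i}$, this factorization forces $w^{2}\in I$, completing the proof.

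There is essentially no obstacle here: once Proposition \ref{prop1} is in hand, the result is a purely combinatorial observation about how rotations of a cyclic word sit inside its square. The only bookkeeping point worth checking carefully is that the indices line up so that $w_{i}$ is indeed a subpath of $w^{2}$ (and not, for instance, of $w$ alone, which would only give information we already had).
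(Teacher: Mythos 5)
Your proof is correct and is essentially the paper's argument: the paper also deduces the corollary from Proposition \ref{prop1} by noting that $w^{2}$ contains every rotation of $w$ as a subpath, so the rotation lying in $I$ forces $w^{2}\in I$ since $I$ is an ideal. You merely spell out the factorization $w^{2}=(\alpha_{1}\cdots\alpha_{i-1})\,w_{i}\,(\alpha_{i}\cdots\alpha_{t})$ explicitly, which is fine.
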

\begin{proof} If $w\in Q$ is an oriented cycle, then $w^2$ 
contains any rotation as a subpath,
hence $w^2\in I$ by \ref{prop1}.
\end{proof}

We remark that \ref{cor1} does not imply an affirmative answer to question 
1 for monomial algebras. 
If $Q$ is the quiver

\begin{center}

\includegraphics[scale=0.8]{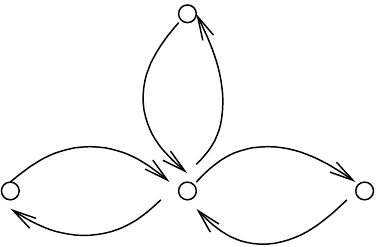}

\end{center}

\noindent
then there exist paths in $Q$ of arbitrary length which do not contain the 
square of a cycle in $Q$. This follows from \cite{Le}. See also \cite{Lo}.

\medskip

At the same time we do not know whether \ref{prop1} is sufficient for yielding an affirmative 
answer
to question 1 for the class of monomial algebras.  

\medskip

We would like to remark that \ref{cor1} will usually fail for non monomial 
algebras 
as the following example shows.
Let $\Lambda$ be given by the following quiver $Q$ \[
\xymatrix{\stackrel{2}{\circ}\ar@/^/^{\alpha}[rr]&&\stackrel{1}{\circ}\ar@/^/^{\delta}[ll]\ar@/^/^{\beta}[rr]&&
\stackrel{3}{\circ}\ar@/^/^{\gamma}[ll]
}\]




\noindent
and the ideal $I$ generated by $\delta\alpha-\beta\gamma$ and $\gamma\beta$.
Then the indecomposable projective $\Lambda$-modules and their Loewy series are 
given by the following diagrams:
\medskip

\begin{center}

\includegraphics[scale=0.9]{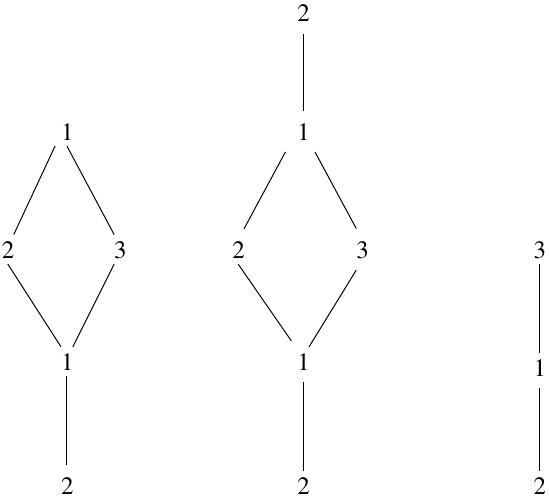}

\end{center}

\noindent
The minimal projective resolutions of the simple $\Lambda$-modules are:
$$0\to P(1)\to P(2)\oplus P(3)\to P(1)\to S(1)\to 0$$
$$0\to P(1)\to P(2)\to S(2)\to 0$$
$$0\to P(3)\to P(1)\to P(3)\to S(3)\to 0,$$

\noindent
hence $\text{gl.dim}\,\Lambda =2$. Consider the cycle $w=\alpha\delta$. Then  $w^2\notin I$. Note however, that the rotation $\delta\alpha$ of $w$ is in $I$.

\medskip

We end with a discussion of the following result by \cite{BHM}. First we recall 
the definition of a truncated 
$m$-cycle. Let $Q$ be a quiver, $I$ an admissible ideal and 
let $m\in\mathbb N$. A cycle $w\in Q$
is called a {\it truncated $m$-cycle}, if the composition of any $m$ composable 
arrows in $w$ belongs to $I$
and the composition of any $m-1$ composable arrows of $w$ does not belong 
to $I$. The following is
shown in \cite{BHM}.

\begin{theorem}\label{BHM} Let $\Lambda=kQ/I.$ If $Q$ contains a 
truncated $2$-cycle, then
$\gldim\Lambda=\infty$.
\end{theorem}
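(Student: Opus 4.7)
The plan is to argue by contradiction. Assume $\gldim\Lambda<\infty$ and write the truncated $2$-cycle as $w=\alpha_1\alpha_2\cdots\alpha_n$ with $\alpha_k\colon v_k\to v_{k+1}$ (indices cyclic, $v_{n+1}=v_1$), so that each $\bar\alpha_k\neq 0$ in $\Lambda$ while $\bar\alpha_k\bar\alpha_{k+1}=0$ for every $k$. The aim is to exhibit an infinite minimal projective resolution of a simple module on the cycle. The starting point is a cyclic complex of indecomposable projectives associated with $w$: each residue class $\bar\alpha_k\in e_{v_k}\Lambda e_{v_{k+1}}\cong\Hom_\Lambda(P(v_k),P(v_{k+1}))$ defines a nonzero morphism $f_k\colon P(v_k)\to P(v_{k+1})$, and the truncation $\bar\alpha_k\bar\alpha_{k+1}=0$ becomes $f_{k+1}\circ f_k=0$. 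This yields a periodic complex
\[
\cdots\xrightarrow{f_{k-1}}P(v_k)\xrightarrow{f_k}P(v_{k+1})\xrightarrow{f_{k+1}}\cdots
\]
of period $n$ in which no $f_k$ splits, since each factors through $\text{rad}\,P(v_{k+1})$. The image $M_k:=\Lambda\bar\alpha_k\subseteq P(v_{k+1})$ is a cyclic submodule with simple top $S(v_k)$, because $e_{v_k}\bar\alpha_k=\bar\alpha_k$, and $M_{k-1}\subseteq\ker(f_k)$ by construction.

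The natural next step is to splice this cyclic complex with the projective cover $\pi\colon P(v_1)\twoheadrightarrow S(v_1)$:
\[
\cdots\to P(v_n)\xrightarrow{f_n}P(v_1)\xrightarrow{\pi}S(v_1)\to 0.
\]
In the instructive baby case of a Nakayama algebra given by the pure cyclic quiver $v_1\to v_2\to\cdots\to v_n\to v_1$ modulo the truncation relations $\alpha_k\alpha_{k+1}=0$ alone, the cyclic complex is already exact and this splice is the minimal projective resolution of $S(v_1)$, visibly of infinite length; this confirms the basic mechanism, and recovers the no-loop theorem when $n=1$.

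The main obstacle, and the technical heart of the argument, is that in general the cyclic complex need not be exact: arrows and relations outside $w$ enlarge each $\text{rad}\,P(v_{k+1})$, so $\ker(f_k)$ may strictly contain $\operatorname{im}(f_{k-1})$. One must show that the cyclic contribution nonetheless persists inside every syzygy of $S(v_1)$. I would attempt this by induction on $t$: prove that $\Omega^t S(v_1)$ always contains a cyclic direct summand isomorphic to $M_{k(t)}$, with $k(t)$ cycling through $n,n-1,\ldots,1,n,\ldots$, and that the truncation $\alpha_{k-1}\alpha_k\in I$ forces the corresponding copy of $S(v_{k(t)-1})$ to appear in the top of $\Omega^{t+1}S(v_1)$ rather than be cancelled by a competing relation. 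To rule out such cancellations one can bring in a global invariant, for example the Hochschild-homology obstruction $\Lambda/[\Lambda,\Lambda]=k^{|Q_0|}$ of Theorem~\ref{Hochschild}, in the same spirit as the proofs of the no-loop conjecture and its refinement in \cite{ILP}. Once this persistence is established, the minimal projective resolution of $S(v_1)$ cannot terminate, yielding the desired contradiction.
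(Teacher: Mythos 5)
The inductive step you describe as ``the technical heart of the argument'' is precisely what is missing, and nothing in the proposal supplies it. The claim that $\Omega^{t}S(v_1)$ always contains a direct summand isomorphic to $M_{k(t)}$, with the copy of $S(v_{k(t)-1})$ surviving into the top of the next syzygy, is not proved, and appealing to ``a global invariant, for example the Hochschild-homology obstruction \ldots in the same spirit as'' the no-loop proofs is a gesture, not an argument. Indeed, the reason the no-loop conjecture (your case $n=1$) was settled by Lenzing \cite{L} and Igusa \cite{I} through trace/Hochschild-type invariants, and its strong form only recently in \cite{ILP}, is exactly that this kind of syzygy-tracking breaks down: minimality of the resolution does not prevent the cyclic piece $\Lambda\bar\alpha_k$ from being glued to, or absorbed by, contributions coming from arrows and relations outside $w$, so it need not split off as a direct summand of any syzygy. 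Worse, your strategy aims at a strictly stronger statement than the theorem: you try to show that a simple module \emph{at a vertex of the cycle} has infinite projective dimension, an analogue of the strong no-loop conjecture for truncated $2$-cycles. The theorem only asserts $\gldim\Lambda=\infty$, i.e.\ that \emph{some} simple (possibly away from the cycle) has infinite projective dimension, and the stronger local statement is not what \cite{BHM} establish; it sits in the territory of the extension-type conjectures discussed in Section~\ref{obstr}. So even if your induction could be repaired, you would be attacking a harder, unproved statement by a method known to be inadequate already in the loop case. The baby Nakayama computation is correct but carries no weight, since there the cyclic complex is exact by fiat.

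For comparison, the actual route of \cite{BHM} (this survey only cites it) is cohomological and global from the start: from a truncated $2$-cycle one constructs nonzero classes in the Hochschild cohomology groups $H^{j}(\Lambda)$ in infinitely many degrees $j$; since an algebra of finite global dimension $d$ has $H^{j}(\Lambda)=0$ for $j>d$ (see the discussion around \cite{H3} in Section~\ref{obstr}), this forces $\gldim\Lambda=\infty$. If you want a proof in the spirit of your sketch, you would have to replace the unsupported ``persistence of a cyclic summand'' claim by an invariant-theoretic argument of this kind; as written, the proposal has a genuine gap at its central step.
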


One of the questions asked in \cite{BHM} is whether the truncated $2$-cycle condition in \ref{BHM} could be replaced by a truncated $m$-cycle condition for
$m\ge 3$. The answer is negative,
as the following example shows. Indeed consider again the quiver $Q$ 
from the previous example.
Let $I=\langle\delta\alpha-\beta\gamma, \alpha\beta,\gamma\beta,\gamma\delta\rangle$ 
and $\Lambda=kQ/I$.
The indecomposable projective $\Lambda$-modules
are given as follows

\begin{center}

\includegraphics[scale=0.9]{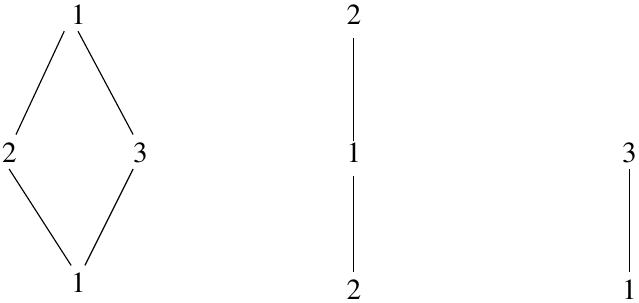}

\end{center}

\noindent
It is easy to see that $\gldim\Lambda =4$. In fact, the minimal 
projective resolutions of
the simple $\Lambda$-modules are given as follows
 $$0\to P(3)\to P(1)\to P(2)\oplus P(3)\to P(1)\to S(1)\to 0,$$
$$0\to P(3)\to P(1)\to P(2)\to S(2)\to 0$$
$$0\to P(3)\to P(1)\to P(2)\oplus P(3)\to P(1)\to P(3)\to S(3)\to 0.$$

\noindent
Now $\alpha\delta$ is a truncated $3$-cycle, since 
$\alpha\delta\alpha,\delta\alpha\delta\in I$ and
$\alpha\delta\notin I$ as well as $\delta\alpha\notin I.$

\end{document}